\newcommand{\R}{\mathbb{R}}      
\newcommand{\ds}{\displaystyle}
\newcommand{\dint}{\ds\int}
\newcommand{\dsum}{\ds\sum}
\newcommand{\eqskip}{ \vspace*{2mm}\\ }
\newtheorem{theorem}{Theorem}
\newtheorem{thm}{Theorem}[section]
\newtheorem{lemma}[thm]{Lemma}
\theoremstyle{definition}
\theoremstyle{remark}
\newtheorem{rmk}[thm]{Remark}
\def\tht{\theta}
\def\Om{\Omega}
\def\om{\omega}
\def\e{\varepsilon}
\def\g{\gamma}
\def\l{\lambda}
\def\p{\partial}
\def\D{\Delta}
\def\E{\mbox{\rm e}}
\def\a{\alpha}
\def\b{\beta}
\def\d{\delta}
\def\L{\Lambda}
\def\vs{\varsigma}
\def\vr{\varrho}
\def\Odr{\mathcal{O}}
\def\H{W_2}
\def\di{\,\mathrm{d}}
\def\xm{\overline{x}}
\DeclareMathOperator{\Dom}{\mathcal{D}}
\numberwithin{equation}{section}
\title[Asymptotics for Dirichlet eigenvalues in $\mathbb{R}^d$]
{Asymptotics of Dirichlet eigenvalues and eigenfunctions of the
Laplacian on thin domains in $\mathbb{R}^d$}
\author{Denis Borisov \and Pedro Freitas}
\address{
Department of Physics and Mathematics, Bashkir State Pedagogical
University, October rev. st., 3a, 450000, Ufa, Russia
}\email{borisovdi@yandex.ru}
\address{Department of Mathematics,
Faculdade de Motricidade Humana (TU Lisbon) {\rm and} Group of
Mathematical Physics of the University of Lisbon\\ Complexo
Interdisciplinar, Av.~Prof.~Gama Pinto~2\\ P-1649-003 Lisboa,
Portugal}\email{freitas@cii.fc.ul.pt}
\date{\today}
\subjclass[2000]{Primary 35P15; Secondary 35J05}
\thanks{D.B. was partially supported by RFBR
and gratefully acknowledges the support from Deligne
2004 Balzan prize in mathematics and the grant of the President of
Russia for young scientists 
and for Leading Scientific Schools (NSh-2215.2008.1). P.F. was
partially supported by POCTI/POCI2010, Portugal, and by project
LC06002 of the Czech Ministry of Education, Youth and Sports. Part
of this work was done while P.F. was visiting the Doppler Institute
in Prague, and he would like to thank the people there, and in
particular P. Exner and D. Krej\v{c}i\v{r}{\'\i}k, for their
hospitality.}
\begin{document}

\allowdisplaybreaks \maketitle
\begin{abstract}
We consider the Laplace operator with Dirichlet boundary
conditions on a domain in $\mathbb{R}^d$ and study the effect that
performing a scaling in one direction has on the eigenvalues and
corresponding eigenfunctions as a function of the scaling
parameter around zero. This generalizes our previous
results in two dimensions and, as in that case, allows us to
obtain an approximation for Dirichlet eigenvalues
for a large class of domains, under very mild assumptions. As an
application, we derive a three--term asymptotic expansion for the first
eigenvalue of $d-$dimensional ellipsoids.
\end{abstract}

\section{Introduction}

In his $1967$ paper~\cite{jose} Joseph studied families
of domains indexed by one parameter to obtain perturbation formulae approximating
eigenvalues in a neighbourhood of a given domain. Within this context, he derived an
elegant expression for the first eigenvalue of ellipses parametrized by their
eccentricity $e$, namely,
\begin{equation}\label{josephellipses}
\begin{array}{lll}
\lambda_{1}(e) & = & \lambda_{1}-\frac{\ds \lambda_{1}}{\ds 2}e^{2}-\frac{\ds \lambda_{1}}{\ds 16}
\left(3-\frac{\ds \lambda_{1}}{\ds 2}\right)e^4\eqskip
& & \hspace*{1cm}-\frac{\ds \lambda_{1}}{\ds 32}
\left(3-\frac{\ds \lambda_{1}}{\ds 2}\right)e^6+\Odr(e^8), \;\; \mbox{
as } e\to0,
\end{array}
\end{equation}
where $\lambda_{1}=\lambda_{1}(0)$ is the first eigenvalue of the disk -- to obtain the
eigenvalue of ellipses of, say, area $\pi$, for instance, this should be divided by $\sqrt{1-e^2}$
and $\lambda_{1}(0)$ be the corresponding value for the disk. The coefficient of order $e^6$
in Joseph's paper is actually incorrect -- we are indebted to M. Ashbaugh
for pointing this out to us, and also for mentioning Henry's book~\cite{henr} where this has been corrected.
Although in principle quite general, the approach used by Joseph yields formulae
which, in the case of domain perturbations, will allow us to obtain explicit
asymptotic expansions only in very special cases such as that of ellipses above. The failure to
obtain these expressions may be the case even when the eigenvalues and eigenfunctions of the
original domain are known, as this does not necessarily mean that the coefficients appearing in the expansion may be computed in closed form. An example of this is the perturbation of a rectangle into
a parallelogram, which Joseph considered as an example of what he called ``pure shear.''

With the purpose of obtaining approximations that can be computed explicitly, in a previous
paper we considered instead the scaling of a given two dimensional domain in one direction
and studied the resulting singular perturbation as the domain approached a segment in the limit~\cite{BF}.
This approach may, of course, have the disadvantage that we might now be starting too
far from the original domain. However, it allows for the explicit derivation of the coefficients
in the expansion in terms of the functions defining the boundary of the domain. As was to be expected,
and can be seen from the examples given in that paper, these four--term approximations are quite accurate
close to the thin limit. A more interesting feature of this approach is that in some cases it also allows us to approximate eigenvalues quite well away from this limit, as may be seen from the following examples. The application of our formula to the ellipses considered above yields
\begin{equation}\label{diskasympt}
\lambda_{1}(\e) = \frac{\ds \pi^{2}}{\ds
4\e^2}+\frac{\ds \pi}{\ds 2\e} +\frac{\ds 3}{\ds 4} + \left(\frac{\ds 11}{\ds
8\pi}+\frac{\ds \pi}{\ds 12}\right)\e + \Odr(\e^2), \;\; \mbox{
as } \e\to+0,
\end{equation}
where we now considered ellipses of radii $1$ and $\e$, $\e$ being the stretch factor. The error in
the approximation is comparable to that in Joseph's formula, except that equation~(\ref{diskasympt})
is more accurate closer to the thin limit while~(\ref{josephellipses}) provides better approximations
near the circle. This is also an advantage, since it is natural for numerical methods to perform better away from the thin limit, but to have more difficulties the closer they are to the singular
case, suggesting that our formulae may also be useful for checking numerical methods close to the
limit case.

As another application we mention the case of the lemniscate
\[
\left( x_{1}^2+ x_{2}^2\right)^2 = x_{1}^2-x_{2}^2.
\]
for which we have
\[
\lambda_{1}(\e) = \frac{\ds 2\pi^{2}}{\ds
\e^2}+\frac{\ds 2\sqrt{3}\pi}{\ds \e} +\frac{\ds 97}{\ds 24} + \left(\frac{\ds 593
}{\ds 64\sqrt{3}\pi}+\frac{\ds \sqrt{3}\pi}{\ds 4}\right)\e + \Odr(\e^2), \;\; \mbox{
as } \e\to+0,
\]
yielding an error at $\e$ equal to one which is in fact smaller than in the case of the
disk above. For details, see~\cite{BF}.

In the present paper we extend the results in~\cite{BF} to general
dimension, in the sense that we now consider domains in
$\mathbb{R}^d$ which are being scaled in one direction and approach
a $(d-1)$--dimensional set in the limit as the stretch parameter
goes to zero. Due to the increase in complexity in the corresponding
formulae as a consequence of the fact that we are now considering
arbitrary dimensions, we only obtained the first three non-zero
coefficients in the asymptotic expansion of the principal
eigenvalue. However, because of smoothness assumption near the point
of global maximum, these include the coefficients of the two
unbounded terms plus the constant term in the expansion -- we know
from the two--dimensional case that lack of smoothness at the point
of maximum will yield other intermediate powers of
$\e$~\cite{frei,frso}. 

As an example, we obtain an expansion for the first eigenvalue of the general $d-$dimensional
ellipsoid
\[
\mathcal{E}=\left\{ (x_{1},\dots,x_{d})\in\R^{d}:\left(\frac{\ds x_{1}}{\ds a_{1}}\right)^2
+\dots+\left(\frac{\ds x_{d}}{\ds a_{d}}\right)^2\leq 1\right\},
\]
where the $a_{i}'$s are positive real numbers. If we choose as projecting hyperplane that which is orthogonal to the $x_{d}$ axis we obtain
\begin{equation}\label{lellipsoid}
\begin{array}{lll}
\lambda_{1}\left(\mathcal{E}_{\e}\right) & = & \frac{\ds \pi^2}{\ds 4a_{d}^2 \e^2}+
\frac{\ds \pi}{\ds 2a_{d}\e}\dsum_{i=1}^{d-1}\frac{\ds 1}{\ds a_{i}}\eqskip
& & \hspace*{5mm}+\frac{\ds 1}{\ds 4}\left(3\dsum_{i=1}^{d-1}\frac{\ds 1}{\ds a_{i}^2}+
\frac{\ds 1}{\ds 2}\dsum_{i=1}^{d-1}\dsum_{j=i+1}^{d-1}\frac{\ds 1}{\ds a_{i}a_{j}}\right)\eqskip
& & \hspace*{15mm} +\Odr(\e^{1/2}),\quad \mbox{ as }\e\to+0.
\end{array}
\end{equation}

Besides the added complexity of the formulae, there are now extra
technical difficulties related to the fact that there may exist
multiple eigenvalues requiring a more careful approach. As in the
two dimensional case, the asymptotic expansions obtained depend on
what happens locally at the point of global maximum width. Also as
in that case, we cannot exclude the existence of a tail term
approaching zero faster than any power of $\e$. 
However, we conjecture that if the boundary of the domain is
analytic, then the expansions will actually correspond to the series
developments of the corresponding eigenvalues.

In the next section we establish the notation and state the main
results of the paper, which are then proved in
Sections~\ref{proofthm1} and~\ref{proofthm2}. In the last section,
and as an application, we derive the above expression for the first
eigenvalue of the ellipsoid.

\section{Statement of results\label{stat}}

Let $x=(x',x_d)$, $x'=(x_1,\ldots,x_{d-1})$ be Cartesian
coordinates in $\mathbb{R}^d$ and $\mathbb{R}^{d-1}$, respectively,
$d\geqslant 2$, and $\om\subset \mathbb{R}^{d-1}$ be a bounded
domain having $C^1$-boundary. By $h_\pm=h_\pm(x')\in
C(\overline{\om})$ we denote two arbitrary functions such that
$H(x'):=h_+(x')+h_-(x')\geqslant 0$ for $x'\in\om$. We consider the
thin domain defined by
\begin{equation*}
\Om_\e:=\{x: -\e h_-(x')<x_d<\e h_+(x'), x'\in\om\},
\end{equation*}
where $\e$ is a small positive parameter. We assume that the
function $H(x')$ attains its global maximum at a single point
$\xm\in\om$ and that there
exists a ball $B'_\d(\xm):=\{x': |x'-\xm|<\d\}$ such that
$h_\pm\in C^\infty(B'_\d(\xm))$. Let $H_0:=H(\xm)$ and
the Taylor expansions for $H$ and $h_-$ at $\xm$ read as follows
\begin{equation}\label{2.1}
H(x')=H_0+\sum\limits_{i=2k}^{\infty} H_i(x'-\xm), \quad
h_-(x')=h_0+\sum\limits_{i=1}^{\infty} h_i(x'-\xm),
\end{equation}
where $H_i$ and $h_i$ are homogeneous polynomials of order $i$,
$H_{2k}(x'-\xm)<0$ for $x'\not=\xm$, and $k\geqslant 1$.

Our purpose is to study the asymptotic behaviour of the
eigenvalues and eigenfunctions of the Dirichlet Laplacian
$-\D^{D}_{\Om_\e}$ in $\Om_\e$. Let $\chi=\chi(x')\in
C^\infty(\mathbb{R}^{d-1})$ be a non-negative cut-off function
equalling one as $|x'-\xm|<\d/3$ and vanishing for
$|x'-\xm|>\d/2$. Denote $\Om_\e^\d:=\Om_\e\cap \{x:
|x'-\xm|<\d\}$.

Let
\begin{equation*}
G_n:=-\D_{\xi'}-\frac{2\pi^2n^2 H_{2k}(\xi')}{H_0^3}
\end{equation*}
be an operator in $L_2(\mathbb{R}^{d-1})$. The spectrum of this
operator consists of countably many isolated eigenvalues of
finite multiplicity having only one accumulation point at
infinity \cite[Ch. I\!V, Sec. 46, Th. 1]{G}. By
\begin{equation*}
\L_{n,1}<\L_{n,2}\leqslant \L_{n,3}\ldots
\end{equation*}
we denote the eigenvalues of this operator arranged in
non-decreasing order and taking the multiplicities into account.
Denote by $\Psi_{n,m}$ the associated eigenfunctions
orthonormalized in $L_2(\mathbb{R}^{d-1})$. It follows from
\cite[Ch. V, Sec. 43, Th. 2]{G} that the functions $\Psi_{n,m}$
decay exponentially at infinity.

Our main results are the following. First, we obtain a two--parameter
description for the eigenvalues.

\begin{theorem}\label{th2.1}
Let $\L=\L_{n,M}=\L_{n,M+1}=\ldots=\L_{n,M+N-1}$ be a
$N$-multiple eigenvalue of $G_n$ for a given $n\in\mathbb{N}$.
Then there exist eigenvalues $\l_{n,m}(\e)$ of
$-\D^{D}_{\Om_\e}$, $m=M,\ldots,M+N-1$ taken counting
multiplicities whose asymptotics as $\e\to+0$ read as follows
\begin{gather}
\l_{n,m}(\e)=\e^{-2}c_0^{(n,m)}+\e^{-2}\sum\limits_{j=2k}^{\infty}
c_j^{(n,m)}\eta^j, \quad \eta:=\e^\a,\quad
\a:=\frac{1}{k+1},\label{2.2}
\\
c_0^{(n,m)}=\frac{\pi^2 n^2}{H_0^2},\quad
c_{2k}^{(n,m)}=\L,\label{2.3}
\end{gather}
and $-c_{2k+1}^{(n,m)}$ are the eigenvalues of the matrix with
the entries
\begin{equation*}
2\pi^2n^2 H_0^{-3}
(H_{2k+1}\Psi_{n,m},\Psi_{n,l})_{L_2(\mathbb{R}^{d-1})},\quad
m,l=M,\ldots,M+N-1.
\end{equation*}
The remaining coefficients are determined by
Lemmas~\ref{lm3.7} and~\ref{lm3.8}.
\end{theorem}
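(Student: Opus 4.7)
The natural approach is a multiscale asymptotic analysis combined with the construction of quasi-modes. I would begin by straightening the domain via the change of variable $t=(x_d+\e h_-(x'))/(\e H(x'))\in(0,1)$, which turns $-\D$ into a second-order operator on $\om\times(0,1)$ with $\e$-dependent coefficients whose leading $\e^{-2}$ part is $-H(x')^{-2}\p_t^2$. At leading order the eigenfunctions separate as $\sqrt{2}\sin(n\pi t)\,v_n(x')$, and the $n$-th transverse mode leaves an effective operator on $\om$ whose principal part is $-\D_{x'}+\e^{-2}\pi^2 n^2 H(x')^{-2}$. Expanding $H^{-2}$ via (\ref{2.1}) produces the $\e^{-2}$-constant $\pi^2 n^2/H_0^2$ and the trap term $-2\e^{-2}\pi^2 n^2 H_0^{-3}H_{2k}(x'-\xm)+\cdots$.

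Next I would introduce the slow variable $\xi'=(x'-\xm)/\eta$ with $\eta=\e^\a$, $\a=1/(k+1)$, chosen so that the kinetic term $\eta^{-2}(-\D_{\xi'})$ and the leading potential $-2\pi^2 n^2 H_0^{-3}\eta^{2k}\e^{-2}H_{2k}(\xi')=-2\pi^2 n^2 H_0^{-3}\eta^{-2}H_{2k}(\xi')$ have the same order. Subtracting $\e^{-2}c_0$ from $\l$ and multiplying the equation by $\eta^2$, the inner problem at leading order becomes the eigenvalue equation for $G_n$; this yields $c_{2k}=\L$ and pins the leading inner profile to a linear combination $\Po=\sum_{l=M}^{M+N-1}\a_l\Psi_{n,l}$ in the $N$-dimensional $\L$-eigenspace.

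The higher-order coefficients are extracted by substituting the formal ansatz
\[
\psi(x)\sim \sqrt{2}\sin(n\pi t)\,\chi(x')\sum_{j\ge 0}\eta^{j}\Phi_j(\xi') + (\text{corrections in other transverse harmonics}),\qquad \l\sim \e^{-2}c_0+\e^{-2}\sum_{j\ge 2k}c_j\eta^j,
\]
and matching powers of $\eta$. Each order gives an equation $(G_n-\L)\Phi_j=F_j$ on $\R^{d-1}$; its Fredholm solvability against $\Psi_{n,M},\ldots,\Psi_{n,M+N-1}$ determines $c_j$, after which the unique orthogonal solution is obtained via the reduced resolvent of $G_n$. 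At order $\eta^{2k+1}$ the forcing is proportional to $H_{2k+1}(\xi')\Po$, and the solvability conditions reduce to the $N\times N$ matrix eigenvalue problem with entries $2\pi^2 n^2 H_0^{-3}(H_{2k+1}\Psi_{n,m},\Psi_{n,l})_{L_2(\R^{d-1})}$; its eigenvalues are precisely $-c_{2k+1}^{(n,m)}$ and its eigenvectors break the degeneracy in $(\a_l)$. The recursion then continues, producing the $\Phi_j$ and $c_j$ referenced in Lemmas~\ref{lm3.7} and~\ref{lm3.8}.

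The last step is to promote these formal series into genuine asymptotics: truncate at a large order $J$, multiply by $\chi$ to match the exponential decay of the $\Psi_{n,l}$ so that the residual is supported in $\Om_\e^\d$, extend by zero, and verify that the resulting functions are quasi-modes with residual $o(\eta^{J})$ in $L_2(\Om_\e)$-norm relative to the truncated eigenvalues. A spectral perturbation argument (estimating the resolvent on small circles around each truncated value) then locates $N$ genuine eigenvalues of $-\D^D_{\Om_\e}$ with the announced asymptotics. The principal obstacle is the multiplicity: the iterative basis rotation that diagonalises each successive splitting must be tracked carefully, and one has to verify that the gaps produced at the order where the splitting first occurs are wide enough (compared to the quasi-mode residual) to identify the produced branches with the correct number of distinct genuine eigenvalues, counted with multiplicity.
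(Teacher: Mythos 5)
Your proposal follows essentially the same route as the paper: the same straightening change of variable $\xi_d=(x_d+\e h_-)/(\e H)$, the same inner scaling $\xi'=(x'-\xm)/\eta$ with $\eta=\e^{1/(k+1)}$, the same hierarchy of solvability (Fredholm) conditions producing $c_0$, $c_{2k}=\L$, the $T^{(2k+1)}$ diagonalisation for $c_{2k+1}$, and finally a quasi-mode plus self-adjoint spectral estimate to locate genuine eigenvalues of $-\D^D_{\Om_\e}$. The only cosmetic differences are that the paper inserts the factor $\sqrt{H(x')}$ in the ansatz so as to work with a symmetric operator (a convenience, not a conceptual change), and it establishes the count of $N$ eigenvalues via linear independence of the truncated quasi-modes rather than via an explicit gap argument.
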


As in ~\cite{BF}, for sufficiently small $\e$ this allows us to derive
the asymptotics for specific eigenvalues, and we give the explicit
expansion for the first eigenvalue in terms of the functions $H$ and $h_{-}$
in the case where $H_{2}$ is negative for $x'\neq\xm$.

\begin{theorem}\label{th2.2}
For any $N\geqslant 1$ there exists $\e_0=\e_0(N)$ such that for
$\e\leqslant \e_0$ the first $N$ eigenvalues of $-\D^D_{\Om_\e}$
are $\l_{1,m}(\e)$, $m=1,\ldots,N$. If
\begin{equation}\label{2.4}
k=1,\quad
H_2(x')=-\frac{1}{2}\sum\limits_{i=1}^{d-1}\a_i^2x_i^2,
\end{equation}
the lowest eigenvalue $\l_{1,1}(\e)$ has the asymptotic
expansion
\begin{align}
&\l_{1,1}(\e)=\frac{c_0^{(1,1)}}{\e^2}+\frac{c_2^{(1,1)}}{\e}
+c_4^{(1,1)}+\Odr(\e^{1/2}),\quad \e\to+0, \label{2.5}
\\
&c_0^{(1,1)}=\frac{\pi^2}{H_0^2},\quad
c_2^{(1,1)}=\sum\limits_{j=1}^{d-1}\tht_j,\quad
\tht_j:=\frac{\pi\a_j}{H_0^{3/2}},\nonumber
\\
&c_4^{(1,1)}=\frac{\pi^2}{H_0^4}\big((3H_2^2(\xi')-2H_0H_4(\xi')
)\Psi_0,\Psi_0\big)_{L_2(\mathbb{R}^{d-1})}\nonumber
\\
&\hphantom{c_4^{(1,1)}=}+\frac{\pi^2}{H_0^2}\sum\limits_{i=1}^{d-1}
\left( \frac{\p h_1}{\p x_i}\right)^2-\frac{2\pi^2}{H_0^3}
\big(H_3(\xi')\widetilde{\Psi}_1,\Psi_0\big)_{L_2(\mathbb{R}^{d-1})},
\nonumber
\\
&\Psi_0(\xi'):=\prod\limits_{j=1}^{d-1}
\frac{\tht_j^{1/4}}{\pi^{1/4}}\E^{-\frac{\tht_j\xi_j^2}{2}},\label{2.8}
\\
&
\widetilde{\Psi}_1(\xi'):=\Psi_0(\xi')\left(\sum\limits_{p,j=1}^{d-1}
\frac{3\pi^2\b_{ppj}\xi_j}{2H_0^3\tht_j (2\tht_p+\tht_j)}\right.
\nonumber
\\
&\hphantom{\widetilde{\Psi}_1(\xi'):=\Psi_0(\xi')\Bigg(}\left.
-\sum\limits_{p,q,j=1}^{d-1}
\frac{\pi^2\b_{pqj}\xi_p\xi_q\xi_j}{H_0^3(\tht_p+\tht_q+\tht_j)}
\right),\nonumber
\end{align}
where it is assumed that $H_3(x')$ is written as
\begin{equation*}
H_3(x')=\sum\limits_{p,q,j=1}^{d-1} \b_{pqj}\xi_p\xi_q\xi_j,
\end{equation*}
and the constants $\b_{pqj}$ are invariant under each
permutation of the indices $p$, $q$, $j$:
\begin{equation}\label{2.7}
\b_{pqj}=\b_{pjq}=\b_{qpj}=\b_{qjp}=\b_{jpq}=\b_{jqp}.
\end{equation}
\end{theorem}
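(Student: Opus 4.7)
The plan is to deduce Theorem~\ref{th2.2} directly from the formal two-parameter expansion of Theorem~\ref{th2.1}, by specialising the data in~\eqref{2.2}--\eqref{2.3} under the assumption~\eqref{2.4}.

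First I would establish the ordering claim. By~\eqref{2.2}--\eqref{2.3} the leading term of $\l_{n,m}(\e)$ is $\pi^2n^2/(H_0^2\e^2)$, which is strictly increasing in $n$, while all subsequent corrections are $\Odr(\e^{-1})$. Fixing any $N$, the gap between the $n=1$ and $n=2$ leading terms dominates the subleading contributions from $\l_{1,m}$ with $m\leq N$ once $\e$ is small enough, and this yields the existence of $\e_0(N)>0$ for which the $N$ smallest Dirichlet eigenvalues of $\Om_\e$ all belong to the $n=1$ family and so coincide with $\l_{1,1}(\e),\ldots,\l_{1,N}(\e)$.

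Under~\eqref{2.4} the operator $G_1$ separates,
\[
G_1=\sum_{j=1}^{d-1}\Big(-\frac{\p^2}{\p\xi_j^2}+\tht_j^2\xi_j^2\Big),\qquad \tht_j=\frac{\pi\a_j}{H_0^{3/2}},
\]
into a sum of one-dimensional harmonic oscillators. Its ground state $\L_{1,1}=\sum_j\tht_j$ is therefore simple ($N=1$), and the corresponding $L_2(\mathbb{R}^{d-1})$-normalised eigenfunction is the Gaussian $\Psi_0$ of~\eqref{2.8}; via~\eqref{2.3} this gives the claimed values of $c_0^{(1,1)}$ and $c_2^{(1,1)}$. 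For the $\eta^3$ coefficient Theorem~\ref{th2.1} furnishes $c_3^{(1,1)}=-2\pi^2H_0^{-3}(H_3\Psi_0,\Psi_0)_{L_2}$, and because $H_3$ is an odd polynomial in $\xi'$ while $\Psi_0^2$ is even, this inner product vanishes. Hence $c_3^{(1,1)}=0$, and truncating~\eqref{2.2} at $j=4$ produces the remainder $\Odr(\eta^5)=\Odr(\e^{1/2})$ announced in~\eqref{2.5}.

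The remaining step is to evaluate $c_4^{(1,1)}$ explicitly using the general recurrence for the coefficients $c_j^{(n,m)}$ alluded to at the end of Theorem~\ref{th2.1}. The three pieces in the stated formula for $c_4^{(1,1)}$ reflect standard second-order perturbation theory: (i) a diagonal part $\pi^2H_0^{-4}\bigl((3H_2^2-2H_0H_4)\Psi_0,\Psi_0\bigr)$ coming from the next-order expansion of the transverse eigenvalue; (ii) a boundary contribution $(\pi^2/H_0^2)\sum_i(\p h_1/\p x_i)^2$ encoding the up/down asymmetry of $\Om_\e$ through the linear part of $h_-$; and (iii) a resolvent correction $-2\pi^2H_0^{-3}(H_3\widetilde\Psi_1,\Psi_0)_{L_2}$ produced by the cubic perturbation. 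Here $\widetilde\Psi_1$ is the unique solution orthogonal to $\Psi_0$ of $(G_1-\L_{1,1})\widetilde\Psi_1=2\pi^2H_0^{-3}H_3\Psi_0$. Since the right-hand side is odd in $\xi'$, one makes the ansatz $\widetilde\Psi_1=P(\xi')\Psi_0(\xi')$ with $P$ a polynomial containing only odd-degree monomials, and separability reduces the construction to a finite-dimensional linear algebra problem on the spans $\{\xi_p\xi_q\xi_j\Psi_0\}$ and $\{\xi_j\Psi_0\}$: a short computation shows that $(G_1-\L_{1,1})(\xi_p\xi_q\xi_j\Psi_0)=2(\tht_p+\tht_q+\tht_j)\xi_p\xi_q\xi_j\Psi_0$ when $p,q,j$ are distinct, whereas when indices coincide the same operator produces an additional lower-order term proportional to $\xi_j\Psi_0$. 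Symmetrising the source via~\eqref{2.7} and inverting the resulting block-triangular system yields exactly the expression~\eqref{2.8} for $\widetilde\Psi_1$, whence $c_4^{(1,1)}$. The main obstacle is precisely this combinatorial bookkeeping, in particular tracking the coupling between the cubic and linear parts of $\widetilde\Psi_1$ that generates the coefficient $3\b_{ppj}/\bigl(2\tht_j(2\tht_p+\tht_j)\bigr)$ in~\eqref{2.8}.
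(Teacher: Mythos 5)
Your plan for the coefficient computation is essentially the paper's own route: exploit the harmonic-oscillator structure of $G_1$ under~\eqref{2.4}, use Remark~\ref{rm3.1} to reduce to the $N=1$ case of Lemma~\ref{lm3.7}, read off $c_0^{(1,1)},c_2^{(1,1)}$, kill $c_3^{(1,1)}$ by parity, and determine $\widetilde\Psi_1$ via the equation $(G_1-\L)\widetilde\Psi_1=2\pi^2H_0^{-3}H_3\Psi_0$ with the odd-polynomial ansatz $\widetilde\Psi_1=R\Psi_0$. That all matches the paper, though the passage from Lemma~\ref{lm3.7}'s formula $c_4^{(1,1)}=-2(\widetilde F_4,\psi_0)_{L_2(\Pi)}$ to the three stated terms requires the explicit evaluation of $\mathcal{L}_1$ and $\mathcal{L}_2$ acting on $\psi_0$ and $\widetilde\Psi_1\sin\pi\xi_d$, with the $\xi_d$-integration over $\Pi$, which you describe only heuristically as ``standard second-order perturbation theory''; the paper carries this out concretely using~\eqref{3.4} and~\eqref{4.5}.

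The genuine gap is in the ordering claim. You argue that the first $N$ Dirichlet eigenvalues must be $\l_{1,m}(\e)$, $m=1,\dots,N$, because the $n=1$ leading term $\pi^2/(H_0^2\e^2)$ is strictly smaller than the $n\geq 2$ leading terms. But Theorem~\ref{th2.1} only asserts the \emph{existence} of eigenvalues of $-\D^D_{\Om_\e}$ with the stated asymptotics; it does not say that the family $\{\l_{n,m}(\e)\}$ exhausts the spectrum, nor does it give any lower bound on eigenvalues outside that family. A priori there could be additional low-lying eigenvalues, not associated with any $(n,m)$, that undercut $\l_{1,N}(\e)$. Your comparison of the $n=1$ and $n=2$ families therefore does not settle the claim. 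This is precisely why the paper needs a separate, global spectral argument: it adapts the quadratic-form/norm-convergence machinery of Friedlander--Solomyak \cite{frso} — constructing the quasi-mode $\psi_\chi$, computing the effective potential $W_\e(x')=\pi^2\e^{-2}\bigl(H^{-2}(x')-H^{-2}(\xm)\bigr)+v(x')$, and extending Lemma~2.1 and Theorems~1.2--1.3 of~\cite{frso} to $d$ dimensions — so as to control the \emph{entire} bottom of the spectrum, not merely exhibit some eigenvalues with the right asymptotics. Without an argument of this kind, the first assertion of Theorem~\ref{th2.2} remains unproved.
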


\begin{rmk}\label{rm2.1}
The assumption (\ref{2.4}) for $H_2$ is not a restriction, since
we can always achieve such form for $H_2$ by an appropriate
change of variables.
\end{rmk}

\section{Proof of Theorem~\ref{th2.1}\label{proofthm1}}

In this section we construct the asymptotics for the eigenvalues
and the eigenfunctions of the operator $-\D^{D}_{\Om_\e}$. This is
first done formally, and justified rigorously afterwards.
In the formal construction we employ the same approach as was used
in \cite[Sec. 3]{BF}.

We are going to construct formally the asymptotic expansions for
the eigenvalues $\l_{n,m}(\e)$, $m=M,\ldots,M+N-1$ which we relabel
as $\l_\e^{(m)}$, $m=1,\ldots,N$. We denote the associated
eigenfunctions by $\psi_\e^{(m)}$. We construct their asymptotic
expansions as the series
\begin{align}
&
\begin{aligned}
&\l_\e^{(m)}=\e^{-2}\mu_\e^{(m)},\quad
\mu_\e^{(m)}=c_0^{(m)}+\sum\limits_{i=2k}^{\infty}
c_i^{(m)}\eta^i,\\
&\psi_\e^{(m)}(x)=\sqrt{H(x')}\widetilde{\psi}_\e^{(m)}(x),\quad 
\widetilde{\psi}_\e^{(m)}(x)=
\sum\limits_{i=0}^{\infty}\eta^i\psi_i^{(m)}(\xi),
\end{aligned}
\label{3.1}
\\
&\xi=(\xi',\xi_d),\quad\xi':=\frac{x'-\xm}{\eta},\quad
\xi_d:=\frac{x_d+\e h_-(x')}{\e H(x')}.\nonumber
\end{align}
We postulate the functions $\psi_i^{(m)}(\xi)$ to be
exponentially decaying as $\xi'\to+\infty$. It means that they
are exponentially small outside $\Om_\e^\d$ (with respect to
$\e$). In terms of the variables $\xi$ the domain $\Om_\e^\d$
becomes $\{\xi: |\xi'|<\d\eta^{-1}, 0<\xi_d<1\}$. As $\eta\to0$,
it ``tends'' to the layer $\Pi:=\{\xi: 0<\xi_d<1\}$ and this is
why we shall construct the functions $\psi_i$ as defined on
$\Pi$.

We rewrite the eigenvalue equation for $\psi_\e$ and $\l_\e$ in
the variables $\xi$,
\begin{equation}\label{3.2}
\begin{gathered}
\begin{aligned}
-\Bigg[&\eta^{2k}\D_{\xi'}+K_{d}\frac{\p^2}{\p\xi_d^2}
+\sum\limits_{i=1}^{d-1} \eta^{2k+1} \left( \frac{\p}{\p\xi_i}K_{i}
\frac{\p}{\p\xi_d} +\frac{\p}{\p\xi_d} K_{i}
\frac{\p}{\p\xi_i}\right)
\\
&+ \eta^{2k+2}\sum\limits_{i=1}^{d-1} \frac{\p}{\p\xi_d} K_{i}^2
\frac{\p}{\p\xi_d} +\eta^{2k+2} K_0
\Bigg]\widetilde{\psi}_\e^{(m)}=\mu_\e^{(m)}\widetilde{\psi}_\e^{(m)} 
\quad \text{in}\quad \Pi,
\end{aligned}
\\
\psi_\e^{(m)}=0\quad\text{on}\quad\p\Pi,
\end{gathered}
\end{equation}
where $K_{i}=K_{i}(\xi,\eta)$, $i=0,\ldots,d$,
\begin{align*}
&K_d(\xi,\eta)=\frac{1}{H^2(\xm+\eta\xi')},
\\
&K_i(\xi,\eta)=\frac{1}{H(\xm+\eta\xi')} \left[\frac{\p h_-} {\p
x_i}(\xm+\eta\xi')-\xi_d\frac{\p H}{\p x_i}(\xm+\eta\xi')\right],
\\
&K_0(\xi,\eta)=\frac{1}{2} H^{-1}(\xm+\eta\xi')\D_{x'}
H(\xm+\eta\xi')
\\
&\hphantom{K_0(\xi,\eta)=}
-\frac{1}{4}H^{-2}(\xm+\eta\xi')\left|\nabla_{x'}
H(\xm+\eta\xi')\right|^2.
\end{align*}

\begin{rmk}\label{rm3.1b}
We have introduced the factor $\sqrt{H(x')}$ in the series
(\ref{3.1}) for $\psi_\e^{(m)}$ in order to have a symmetric
differential operator in the equation (\ref{3.2}).
\end{rmk}

We expand the functions $K_i$ into the Taylor series w.r.t. $\eta$
and employ (\ref{2.1}) to obtain
\begin{equation}
\begin{aligned}
&K_d(\xi,\eta)=H_0^{-2}+\sum\limits_{j=2k}^{\infty} \eta^j
P_j^{(d)}(\xi'),
\\
&K_{i}(\xi,\eta)=\sum\limits_{j=0}^{\infty}\eta^j
K_j^{(i)}(\xi),
\\
&K_j^{(i)}(\xi):=P_j^{(i)}(\xi')+\xi_d Q_j^{(i)}(\xi'),\quad
i=1,\ldots,d-1,
\\
&K_0(\xi,\eta)=\sum\limits_{i=0}^\infty \eta^i P_i^{(0)}(\xi'),
\end{aligned}\label{3.3}
\end{equation}
where $P_j^{(i)}$, $Q_j^{(i)}$ are polynomials, and, in
particular,
\begin{equation}\label{3.4}
\begin{aligned}
& P_{2k}^{(d)}(\xi')=-\frac{2 H_{2k}(\xi')}{H_0^3}, &&
P_{2k+1}^{(d)}(\xi')=-\frac{2 H_{2k+1}(\xi')}{H_0^3},
\\
& P_0^{(i)}(\xi')=\frac{1}{H_0} \frac{\p h_1}{\p x_i}(\xm), &&
Q_0^{(i)}(\xi')=0.
\end{aligned}
\end{equation}
We substitute (\ref{3.1}), (\ref{3.3}), (\ref{3.4}) into
(\ref{3.2}) and equate the coefficients of like powers of
$\eta$. This leads us to the following boundary value problems for $\psi_i^{(m)}$,
\begin{align}
&
\begin{aligned}
&\left(\frac{1}{H_0^2}\frac{\p^2}{\p\xi_d^2}+c_0^{(m)}\right)
\psi_j^{(m)}=0 \quad \text{in}\quad \Pi,
\\
&\hphantom{\Big(}\psi_j^{(m)}=0\quad\text{on}\quad\p\Pi,\quad
j=0,\ldots,2k-1,
\end{aligned}
\label{3.5}
\\
&
\begin{aligned}
-&\left(\frac{1}{H_0^2}\frac{\p^2}{\p\xi_d^2}+c_0^{(m)}\right)
\psi_{2k}^{(m)}
\\
&\hphantom{\frac{1}{H_0^2}\frac{\p^2}{\p\xi_d^2}+c_0^{(m)}}=\left(
\D_{\xi'}-\frac{2H_{2k}(\xi')}{H_0^3}\frac{\p^2}{\p\xi_d^2}
+c_{2k}^{(m)}\right) \psi_0^{(m)}\quad \text{in}\quad \Pi,
\\
&\hphantom{\Big(}\psi_{2k}^{(m)}=0\quad\text{on}\quad\p\Pi,
\end{aligned}\label{3.6}
\\
&
\begin{aligned}
-&\left(\frac{1}{H_0^2}\frac{\p^2}{\p\xi_d^2}+c_0^{(m)}\right)
\psi_j^{(m)}=\left(
\D_{\xi'}-\frac{2H_{2k}(\xi')}{H_0^3}\frac{\p^2}{\p\xi_d^2}
+c_{2k}^{(m)} \right) \psi_{j-2k}^{(m)}
\\
&\hphantom{\Bigg(\frac{1}{H_0^2}\frac{\p^2}{\p\xi_d^2}+c_0^{(m)}}
+c_j^{(m)}\psi_0^{(m)}+\sum\limits_{q=1}^{j-2k-1}
c_{j-q}^{(m)}\psi_q^{(m)}+F_j^{(m)}\quad \text{in}\quad \Pi,
\\
&\hphantom{\Big(}\psi_j^{(m)}=0\quad\text{on}\quad\p\Pi,\quad
j\geqslant 2k+1,
\end{aligned}\label{3.7}
\\
&
\begin{aligned}
&F_j^{(m)}:=\sum\limits_{q=0}^{j-2k-1}
\mathcal{L}_{j-q-2k}\psi_q^{(m)},
\\
&\mathcal{L}_j:=\sum\limits_{i=1}^{d-1} \left( \frac{\p}{\p\xi_d}
K_{j-1}^{(i)} \frac{\p}{\p\xi_i} + \frac{\p}{\p\xi_i} K_{j-1}^{(i)}
\frac{\p}{\p\xi_d}\right)
\\
&\hphantom{\mathcal{L}_j:=}+ \sum\limits_{i=1}^{d-1}
\sum\limits_{s=0}^{j-2} \frac{\p}{\p\xi_d} K_s^{(i)}
K_{j-s-2}^{(i)}\frac{\p}{\p\xi_d}+
P_{j+2k}^{(d)}\frac{\p^2}{\p\xi_d^2}+P^{(0)}_{j-2},
\end{aligned}\label{3.8}
\end{align}
where $P_{-1}^{(0)}=0$.  Problems (\ref{3.5}) can be solved
explicitly with
\begin{equation}\label{3.9}
\psi_j^{(m)}(\xi)=\Psi_j^{(m)}(\xi')\sin\pi n\xi_d,\quad
c_0=\frac{\pi^2 n^2}{H_0^2},
\end{equation}
where $j=0,\ldots, 2k-1$, and $\Psi_j^{(m)}$ are the functions to be
determined. The last identity proves formula (\ref{2.3}) for
$c_0^{(n,m)}$.

We consider the problem (\ref{3.6}) as posed on the interval
$(0,1)$ and depending on $\xi'$. It is solvable, if and only if
the right-hand side is orthogonal to $\sin\pi n \xi_d$ in
$L_2(0,1)$. It implies the equation
\begin{equation}\label{3.10}
-\left(\D_{\xi'}+\frac{2\pi^2 n^2 H_{2k}(\xi')}{H_0^3}\right)
\Psi_0^{(m)}=c_{2k}^{(m)}\Psi_0^{(m)}\quad \text{in}\quad
\mathbb{R}^{d-1}.
\end{equation}
Thus, $c_{2k}^{(m)}$ is an eigenvalue of the operator $G_n$, i.e.,
$c_{2k}^{(m)}=\L$. Then $\Psi_0^{(m)}$ is one of the eigenfunctions
associated with $\L$. These eigenfunctions are assumed to be
orthonormalized in $L_2(\mathbb{R}^{d-1})$. We substitute the equation
(\ref{3.10}) into (\ref{3.6}) and see that the formula (\ref{3.9})
is valid also for $j=2k$.

The problems (\ref{3.7}) are solvable, if and only if the
right-hand sides are orthogonal to $\sin\pi n\xi_d$ in
$L_2(0,1)$. It gives rise to the equations
\begin{align}
&(G_n-\L)\Psi_{j-2k}^{(m)}=f_j^{(m)}+\sum\limits_{q=1}^{j-2k-1}
c_{j-q}^{(m)}\Psi_q^{(m)}+c_j^{(m)}\Psi_0^{(m)},\label{3.11a}
\\
&
\begin{aligned}
&\Psi_j^{(m)}=\Psi_j^{(m)}(\xi'):=2\int\limits_0^1
\psi_j^{(m)}(\xi) \sin\pi n\xi_d\di\xi_d,
\\
&f_j^{(m)}=f_j^{(m)}(\xi'):=2\int\limits_0^1 F_j^{(m)}(\xi)
\sin\pi n\xi_d\di\xi_d.
\end{aligned}
\label{3.11b}
\end{align}

To solve the problems (\ref{3.7}), (\ref{3.11a}) we need some
auxiliary lemmata. The first of these follows from standard results
in the spectral theory of self-adjoint operators.

\begin{lemma}\label{lm3.1}
Let $f\in L_2(\mathbb{R}^{d-1})$. The equation
\begin{equation}\label{3.12}
(G_n-\L)u=f
\end{equation}
is solvable, if and only if
\begin{equation*}
(f,\Psi_0^{(m)})_{L_2(\mathbb{R}^{d-1})}=0,\quad m=1,\ldots,N.
\end{equation*}
The solution is unique up to a linear combination of the
functions $\Psi_0^{(m)}$.
\end{lemma}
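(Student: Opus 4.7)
The plan is to obtain this statement as an immediate application of the Fredholm alternative for self-adjoint operators with compact resolvent. First I would record that the operator $G_n=-\D_{\xi'}-2\pi^2 n^2 H_{2k}(\xi')/H_0^3$ is a Schr\"odinger operator on $\mathbb{R}^{d-1}$ whose potential $V(\xi'):=-2\pi^2 n^2 H_{2k}(\xi')/H_0^3$ is a homogeneous polynomial of degree $2k$. The assumption in (\ref{2.1}) that $H_{2k}(\xi')<0$ for $\xi'\not=0$, together with homogeneity, forces $V(\xi')\to +\infty$ as $|\xi'|\to\infty$, and in fact $V(\xi')\geqslant c|\xi'|^{2k}$ for some $c>0$. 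This confining potential makes $G_n$ essentially self-adjoint on $C_c^\infty(\mathbb{R}^{d-1})$ with purely discrete spectrum and compact resolvent; these properties are precisely the ones already quoted from \cite{G} in Section~\ref{stat}.

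Next, I would identify the kernel of $G_n-\L$. Since $\L=\L_{n,M}=\L_{n,M+1}=\ldots=\L_{n,M+N-1}$ is by hypothesis an $N$-multiple eigenvalue of $G_n$, and since all eigenvalues are isolated and of finite multiplicity, the null-space of $G_n-\L$ has dimension exactly $N$ and is spanned by the associated eigenfunctions. Under the relabeling introduced before (\ref{3.9}), these are precisely $\Psi_0^{(1)},\ldots,\Psi_0^{(N)}$, which are orthonormal in $L_2(\mathbb{R}^{d-1})$.

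Finally, I would invoke the fact that for any self-adjoint operator $T$ with compact resolvent, the range of $T-\L$ is closed (because $\L$ is an isolated point of the spectrum, so $T-\L$ has a spectral gap around zero on the orthogonal complement of its kernel) and coincides with $(\ker(T-\L))^{\perp}$. Applied to $T=G_n$ this gives that (\ref{3.12}) is solvable if and only if $(f,\Psi_0^{(m)})_{L_2(\mathbb{R}^{d-1})}=0$ for every $m=1,\ldots,N$. Uniqueness up to a linear combination of the $\Psi_0^{(m)}$ is then immediate, since any two solutions differ by an element of $\ker(G_n-\L)$. There is no genuine obstacle in the argument; the only point that deserves brief verification is the confining behaviour of $V$ and the consequent compactness of the resolvent, which in turn gives both the closed-range property and the finite-dimensionality of the kernel used above.
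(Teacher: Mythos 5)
Your proposal is correct and follows exactly the route the paper has in mind: the paper gives no explicit proof, stating only that the lemma ``follows from standard results in the spectral theory of self-adjoint operators'' after having already cited Glazman for the discreteness of the spectrum of $G_n$. Your elaboration -- confining polynomial potential $\Rightarrow$ compact resolvent $\Rightarrow$ $\ker(G_n-\L)=\operatorname{span}\{\Psi_0^{(1)},\ldots,\Psi_0^{(N)}\}$ and, since $\L$ is isolated, $\operatorname{ran}(G_n-\L)=\ker(G_n-\L)^{\perp}$ -- is precisely the Fredholm alternative argument the authors are invoking.
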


By $\mathfrak{g}_n$ we denote the sesquilinear form associated
with $G_n$,
\begin{equation*}
\mathfrak{g}_n[u,v]=(\nabla u,\nabla v)_{L_2(\mathbb{R}^{d-1})}
-(H_{2k}u,v)_{L_2(\mathbb{R}^{d-1})}.
\end{equation*}
The domain of this form is
\begin{equation*}
\Dom(\mathfrak{g}_n)=\H^1(\mathbb{R}^{d-1})\cap \{u:
(1+|\xi'|^k)u\in L_2(\mathbb{R}^{d-1})\}.
\end{equation*}
By $\Dom(G_n)$ we denote the domain of $G_n$. The set
$C_0^\infty(\mathbb{R}^{d-1})$ is dense in
$\Dom(\mathfrak{g}_n)$ in the topology induced by
$\mathfrak{g}_n$ (\cite[Ths. 1.8.1, 1.8.2]{D}).

\begin{lemma}\label{lm3.2}
Let $f\in L_2(\mathbb{R}^{d-1})$, $u\in
L_2(\mathbb{R}^{d-1})\cap \H^1(S)$ for each bounded domain
$S\subset \mathbb{R}^{d-1}$ and for each $\phi\in
\Dom(\mathfrak{g})$ the identity
\begin{equation}\label{3.14}
\int\limits_{\mathbb{R}^{d-1}} \nabla u\cdot \nabla\phi\di\xi'-
\int\limits_{\mathbb{R}^{d-1}}
(H_{2k}(\xi')-\L)u\overline{\phi}\di\xi'=
\int\limits_{\mathbb{R}^{d-1}} f\overline{\phi}\di\xi'
\end{equation}
holds true. Then $u\in\Dom(G_n)$ and the equation (\ref{3.12})
is valid.
\end{lemma}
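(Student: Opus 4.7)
The plan is to upgrade the local information on $u$ to membership in the form domain $\Dom(\mathfrak{g}_n)$, after which the first representation theorem for sesquilinear forms places $u$ in $\Dom(G_n)$ and delivers~(\ref{3.12}) at once. I proceed in two steps: a global a priori bound, followed by the form-theoretic identification.

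For the a priori bound, fix a smooth radial cut-off $\chi_R(\xi'):=\chi(|\xi'|/R)$ with $\chi\equiv 1$ on $[0,1]$, $\chi\equiv 0$ outside $[0,2]$, and $|\nabla\chi_R|\leqslant C/R$. Because $u\in\H^1(S)$ on every bounded $S$, the function $\phi_R:=\chi_R^2 u$ has compact support in $\{|\xi'|\leqslant 2R\}$, lies in $\H^1(\R^{d-1})$, trivially satisfies $(1+|\xi'|^k)\phi_R\in L_2(\R^{d-1})$, and is therefore admissible in~(\ref{3.14}). Expanding $\nabla(\chi_R^2 u)=\chi_R^2\nabla u+2\chi_R u\nabla\chi_R$, taking real parts, and exploiting the sign condition $H_{2k}(\xi')<0$ for $\xi'\not=0$ gives
\begin{equation*}
\int\chi_R^2|\nabla u|^2\di\xi'+\int|H_{2k}|\chi_R^2|u|^2\di\xi' \leqslant 2\left|\int\chi_R\overline{u}\,\nabla u\cdot\nabla\chi_R\,\di\xi'\right|+\|f\|_{L_2}\|u\|_{L_2}+|\L|\,\|u\|_{L_2}^2.
\end{equation*}
Young's inequality absorbs the cross term into half of the first integral on the left, leaving a right-hand side bounded uniformly in $R$ (using $|\nabla\chi_R|\leqslant C/R$). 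Monotone convergence as $R\to\infty$ then yields $\nabla u\in L_2(\R^{d-1})$ and $|H_{2k}|^{1/2}u\in L_2(\R^{d-1})$; since $H_{2k}$ is a negative-definite homogeneous polynomial of degree $2k$, $|H_{2k}(\xi')|$ is comparable to $|\xi'|^{2k}$ at infinity, so $(1+|\xi'|^k)u\in L_2(\R^{d-1})$ and hence $u\in\Dom(\mathfrak{g}_n)$.

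With $u$ in the form domain, the hypothesis~(\ref{3.14}) reads $\mathfrak{g}_n[u,\phi]=(f-\L u,\phi)_{L_2(\R^{d-1})}$ for every $\phi\in\Dom(\mathfrak{g}_n)$, and $f-\L u\in L_2(\R^{d-1})$. The first representation theorem for the closed, semibounded form $\mathfrak{g}_n$ then puts $u$ in $\Dom(G_n)$ and identifies $G_n u$, which after rearrangement is exactly~(\ref{3.12}).

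The main obstacle is the a priori bound in the first step: extracting the global weighted-$L_2$ control from only $u\in L_2(\R^{d-1})$, local $\H^1$-regularity, and the integral identity, without any initial decay hypothesis on $u$. Once the cross term generated by differentiating the cut-off is tamed via Young's inequality and the estimate $|\nabla\chi_R|\leqslant C/R$, the form-theoretic conclusion is essentially automatic.
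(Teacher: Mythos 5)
Your proof is correct and follows essentially the same route as the paper's: both test~(\ref{3.14}) with $\phi=\chi^2 u$ for a rescaled cut-off, obtain a uniform-in-scale bound on $\|\chi\nabla u\|_{L_2}^2-(H_{2k}\chi u,\chi u)_{L_2}$, and let the cut-off expand to conclude $u\in\Dom(\mathfrak g_n)$, after which the representation theorem gives $u\in\Dom(G_n)$. The only (cosmetic) difference is in handling the cross term from $\nabla(\chi^2 u)$: you absorb it via Young's inequality using $|\nabla\chi_R|\leqslant C/R$, whereas the paper integrates by parts once more to rewrite it as $\tfrac12(u\,\Delta\chi^2,u)$ and bounds that by $Ct^2\|u\|_{L_2}^2$; both yield the same uniform bound.
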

\begin{proof}
Let $\chi_1=\chi_1(t)$ be a non-negative infinitely
differentiable cut-off function taking values in $[0,1]$,
equalling one as $t<1$, and vanishing as $t>2$. It is clear that
for each $t>0$ the function $u(\xi')\chi_1(|\xi'|t)$ belongs to
$\Dom(\mathfrak{g}_n)$. We substitute
$\phi(\xi')=u(\xi')\chi_1(|\xi'|t)$ into (\ref{3.14}) and
integrate by parts,
\begin{equation}\label{3.15}
\begin{aligned}
 \|\chi_1\nabla u\|_{L_2(\mathbb{R}^{d-1})}^2 &-
(H_{2k}\chi_1 u,\chi_1u)_{L_2(\mathbb{R}^{d-1})}=(\chi_1
f,\chi_1 u)_{L_2(\mathbb{R}^{d-1})}\\ &+\frac{1}{2}
(u\D_{\xi'}\chi_1^2,u)_{L_2(\mathbb{R}^{d-1})}
 +\L \|\chi_1
u\|_{L_2(\mathbb{R}^{d-1})}^2.
\end{aligned}
\end{equation}
Hence,
\begin{equation}\label{3.16}
\begin{aligned}
\|\nabla u\|^2_{L_2(\mathcal{B}_{t^{-1}}'(0))}
&-(H_{2k}u,u)_{L_2(\mathcal{B}_{t^{-1}}'(0))} \leqslant
\|f\|_{L_2(\mathbb{R}^{d-1})}\|u\|_{L_2(\mathbb{R}^{d-1})}
\\
&+ Ct^2\|u\|_{L_2(\mathbb{R}^{d-1})}
+\L\|u\|_{L_2(\mathbb{R}^{d-1})}^2,
\end{aligned}
\end{equation}
where the constant $C$ is independent of $t$, and
$\mathcal{B}'_{r}(a):=\{\xi': |\xi'-a|<r\}$. Passing to the
limit as $t\to+0$, we conclude that $u\in\Dom(\mathfrak{g}_n)$
and in view of (\ref{3.14}) this function belongs to
$\Dom(\mathfrak{g}_n)$ and solves the equation (\ref{3.12}).
\end{proof}

Let $\mathfrak{V}$ be the space of the functions $f\in
C^\infty(\mathbb{R}^{d-1})$ such that
\begin{equation*}
(1+|\xi'|^\g)\frac{\p^\tau f}{\p{\xi'}^\tau}\in
L_2(\mathbb{R}^{d-1})
\end{equation*}
for each $\tau\in \mathbb{Z}^d_+$, $\g\in\mathbb{Z}_+$.

\begin{lemma}\label{lm3.3}
Let $f\in \mathfrak{V}$, and $u$ be a solution to (\ref{3.12}).
Then $u\in \mathfrak{V}$.
\end{lemma}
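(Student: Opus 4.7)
The plan is to establish in turn the two defining properties of $\mathfrak{V}$: smoothness of $u$, and arbitrary polynomial weighted $L_2$ decay of every derivative. Both rely on the structural fact that $-H_{2k}(\xi')$ is a negative definite homogeneous polynomial of degree $2k$ and hence grows like $c|\xi'|^{2k}$ at infinity, so that $G_n$ is a confining Schr\"odinger operator of harmonic-oscillator type. For smoothness, the operator $G_n-\L$ is uniformly elliptic on every bounded set with $C^\infty$ coefficients, $f\in C^\infty$, and $u\in\Dom(G_n)$ by Lemma~\ref{lm3.2} (in particular $u\in\H^1(B)$ on every ball $B$); a standard interior bootstrap via iterated local Sobolev regularity estimates gives $u\in C^\infty(\R^{d-1})$.

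For polynomial $L_2$ decay of $u$ itself, I would prove by induction on $\g\in\mathbb{Z}_+$ that $(1+|\xi'|)^\g u\in L_2(\R^{d-1})$, the base case following from $u\in\Dom(\mathfrak{g}_n)$. For the inductive step I would test (\ref{3.14}) against $\phi=\chi_R^2(1+|\xi'|^2)^\g u$, where $\chi_R$ is a smooth cutoff equal to one on $|\xi'|<R$ and supported in $|\xi'|<2R$, chosen so that $\phi\in\Dom(\mathfrak{g}_n)$. After integration by parts one obtains a relation of the form
\begin{equation*}
\int_{\R^{d-1}}(1+|\xi'|^2)^\g\chi_R^2\bigl(|\nabla u|^2-H_{2k}(\xi')u^2\bigr)\di\xi' = I_1(R,\g)+I_2(R,\g),
\end{equation*}
where $I_1$ collects commutator terms produced by $\nabla\chi_R$ and $\nabla(1+|\xi'|^2)^\g$ (bounded in terms of weighted $L_2$ norms of $u$ of total polynomial weight strictly below $2(\g+k)$), and $I_2=\int f\phi\di\xi'$ is controlled by Cauchy--Schwarz using $f\in\mathfrak{V}$. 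The coercivity bound $-H_{2k}(\xi')\geqslant c|\xi'|^{2k}$ for $|\xi'|$ large turns the left-hand side into an upper bound for $c\int|\xi'|^{2k+2\g}\chi_R^2 u^2\di\xi'$; letting $R\to\infty$ by monotone convergence and absorbing the lower-order norms via the inductive hypothesis upgrades the weight by $k$ and closes the induction.

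To treat derivatives, commuting $\p^\tau$ through (\ref{3.12}) yields
\begin{equation*}
(G_n-\L)\p^\tau u=\p^\tau f+\frac{2\pi^2 n^2}{H_0^3}\sum_{0\neq\si\leqslant\tau}\binom{\tau}{\si}(\p^\si H_{2k})(\p^{\tau-\si}u),
\end{equation*}
whose right-hand side lies in every weighted $L_2$ by induction on $|\tau|$ combined with the previous step (note $\p^\si H_{2k}$ is polynomial of degree $2k-|\si|$). Applying the weighted estimate to $\p^\tau u$ in place of $u$ then produces $(1+|\xi'|)^\g\p^\tau u\in L_2$ for every $\g$ and $\tau$, which together with smoothness gives $u\in\mathfrak{V}$. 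The main technical obstacle I anticipate is the careful bookkeeping in the weighted form identity: showing that the commutator contributions $I_1$ are genuinely absorbable by the dominant potential term, and that the test function remains in $\Dom(\mathfrak{g}_n)$ at every stage --- this is where the gain of $k$ powers of $|\xi'|$ per step, supplied by the coercivity of $-H_{2k}$, is essential.
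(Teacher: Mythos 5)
Your proposal is correct and follows essentially the same route as the paper: smoothness from standard elliptic regularity, polynomial $L_2$ decay of $u$ by testing the weak formulation (\ref{3.14}) against weighted cutoff multiples of $u$ and using the coercivity of $-H_{2k}$ to gain $k$ powers per inductive step, and then differentiating (\ref{3.12}) (using $\p^\si H_{2k}$ has lower degree) and repeating the weighted estimate for each derivative. The only cosmetic difference is the precise form of the cutoff-weight test function ($\chi_R^2(1+|\xi'|^2)^\g u$ versus the paper's $\chi_1(|\xi'|t)|\xi'|^\b u$), which is immaterial.
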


\begin{proof}
Since $u\in\Dom(G_n)$, we have $\nabla u\in L_2(\mathbb{R}^{d-1})$,
$(1+|\xi'|^k)u\in L_2(\mathbb{R}^{d-1})$, and due to standard
smoothness improving theorems $u\in C^\infty(\mathbb{R}^{d-1})$. The
identity (\ref{3.15}) is also valid with $\chi_1$ replaced by
$\chi_1(|\xi'|t)|\xi'|^\b$. Employing this identity and proceeding
as in (\ref{3.16}), we check that $(1+|\xi'|^\b)\nabla u\in
L_2(\mathbb{R}^{d-1})$, $(1+|\xi'|^{k+\b}) u\in
L_2(\mathbb{R}^{d-1})$, if $(1+|\xi'|^\b) u\in
L_2(\mathbb{R}^{d-1})$ for some $\b\in\mathbb{Z}_+$. Applying this
fact  by induction and using that $(1+|\xi'|^k) u\in
L_2(\mathbb{R}^{d-1})$, we conclude that $(1+|\xi'|^\g)\nabla u\in
L_2(\mathbb{R}^{d-1})$, $(1+|\xi'|^{k+\g}) u\in
L_2(\mathbb{R}^{d-1})$ for each $\g\in \mathbb{Z}_+$.

We differentiate the equation (\ref{3.12}) w.r.t. $\xi_i$,
\begin{equation*}
(G_n-\L)\frac{\p u}{\p\xi_i}=\frac{\p f}{\p\xi_i}+\frac{\p
H_{2k}}{\p\xi_i}u.
\end{equation*}
The right hand side belongs to $L_2(\mathbb{R}^{d-1})$ and the
function $\frac{\p u}{\p\xi_i}$ satisfies the hypothesis of
Lemma~\ref{lm3.2}. Applying this lemma, we see that $\frac{\p u
}{\p\xi_i}\in \Dom(G_n)$. Proceeding as above, one can make sure
that
\begin{equation*}
(1+|\xi'|^\g)\nabla\frac{\p u}{\p\xi_i}\in L_2(\mathbb{R}^{d-1})
\end{equation*}
for each $\g\in \mathbb{Z}_+$. Repeating the described process,
we complete the proof.
\end{proof}

As it follows from Lemma~\ref{lm3.1}, the solvability condition
of the equation (\ref{3.11a}) is
\begin{equation*}
(f_j^{(m)},\Psi_0^{(l)})_{L_2(\mathbb{R}^{d-1})}+\sum\limits_{q=1}^{j-2k-1}
c_{j-q}^{(m)}(\Psi_q^{(m)},\Psi_0^{(l)})_{L_2(\mathbb{R}^{d-1})}+c_j^{(m)}\d_{ml}=0,
\end{equation*}
where $m,l=1,\ldots,N$, and $\d_{ml}$ is the Kronecker delta.
Here we have supposed that the functions $\Psi_0^{(m)}$ are
orthonormalized in $L_2(\mathbb{R}^{d-1})$. In view of
(\ref{3.11b}) these identities can be rewritten as
\begin{equation}\label{3.10b}
2(F_j^{(m)},\psi_0^{(l)})_{L_2(\Pi)}+
2\sum\limits_{q=1}^{j-2k-1}c_{j-q}^{(m)}
(\psi_q^{(m)},\psi_0^{(l)})_{L_2(\Pi)}+c_j^{(m)}\d_{ml}=0,
\end{equation}
where $m,l=1,\ldots,N$.

Consider the problem (\ref{3.7}) for $j=2k+1$. The solvability
condition is the equation (\ref{3.11a}) for the same $j$. Since
$\Psi_0^{(m)}\in \mathfrak{V}$, the same is true for
$f_{2k+1}^{(m)}$. By (\ref{3.10b}), this equation is solvable, if
and only if
\begin{align}
&T_{ml}^{(2k+1)}+c_{2k+1}^{(m)}\d_{ml}=0,\quad m,l=1,\ldots,N,
\label{3.18}
\\
&T_{ml}^{(2k+1)}:=2\left(\mathcal{L}_1
\psi_0^{(m)},\psi_0^{(l)}\right)_{L_2(\Pi)}.\nonumber
\end{align}
The definition of $\mathcal{L}_1$ and (\ref{3.4}) yield
\begin{equation}\label{3.21a}
T_{ml}^{(2k+1)}=2\pi^2 n^2 H_0^{-3}(H_{2k+1}\Psi_0^{(m)},
\Psi_0^{(l)})_{L_2(\mathbb{R}^{d-1})}.
\end{equation}
Hence, the matrix $T^{(2k+1)}$ with the entries $T_{ml}^{(2k+1)}$ is
symmetric. This matrix describes a quadratic form on the space
spanned over $\Psi_0^{(m)}$, $m=1,\ldots,N$. By the theorem on the
simultaneous diagonalization of two quadratic forms we conclude that
the eigenfunctions $\Psi_0^{(m)}$ can be chosen as orthonormalized
in $L_2(\mathbb{R}^{d-1})$ and, in addition, so that the matrix
$T^{(2k+1)}$ is diagonal. In what follows we assume that these
functions are chosen in such a way. Then identities (\ref{3.18})
imply
\begin{equation}\label{3.19}
c_{2k+1}^{(m)}=-\tau_m^{(2k+1)},
\end{equation}
where $\tau_m^{(2k+1)}$ are the eigenvalues of $T^{(2k+1)}$.

By Lemma~\ref{lm3.1} the solution to (\ref{3.11a}) for $j=2k+1$
reads as follows
\begin{equation}\label{3.21}
\Psi_1^{(m)}(\xi')=\Phi_1^{(m)}(\xi')+\sum\limits_{p=1}^{N}
b_{p,1}^{(m)}\Psi_0^{(p)},
\end{equation}
where $\Phi_1^{(m)}$ is orthogonal to all $\Psi_0^{(l)}$,
$l=1,\ldots,N$, in $L_2(\mathbb{R}^{d-1})$ and $b_{p,1}^{(m)}$ are
constants to be found. It follows from Lemma~\ref{lm3.2} that
$\Phi_1^{(m)}\in\mathfrak{V}$. The definition (\ref{3.8}) of
$\mathcal{L}_1$ and the equation (\ref{3.11a}) for $j=2k+1$ imply
that the right-hand side of the equation in (\ref{3.7}) for $j=2k+1$
is zero. Hence, the solution to the problem (\ref{3.7}) for $j=2k+1$
is given by the formula (\ref{3.9}), where $\Psi^{(m)}_{2k+1}$ is to
be found. We substitute (\ref{3.9}), (\ref{3.21}) into the equation
(\ref{3.11a}) for $j=2k+2$. In view of (\ref{3.10b}) and
(\ref{3.19}) the solvability condition for this equation is as
follows
\begin{equation}\label{3.24}
\begin{aligned}
&b_{l,1}^{(m)}(\tau_l^{(2k+1)}-\tau_m^{(2k+1)})
+c_{2k+2}^{(m)}\d_{ml}
\\
&+2 \big(\mathcal{L}_2\psi_0^{(m)}
+\mathcal{L}_1\Phi_1^{(m)}\sin\pi
n\xi_d,\psi_0^{(l)}\big)_{L_2(\Pi)}=0,\quad l=1,\ldots,N.
\end{aligned}
\end{equation}
Assume that all the eigenvalues $\tau_m^{(2k+1)}$ are different.
In this case the last identities imply
\begin{equation}\label{3.25}
\begin{aligned}
&b_{l,1}^{(m)}=\frac{2\big(\mathcal{L}_2\psi_0^{(m)}
+\mathcal{L}_1\Phi_1^{(m)}\sin\pi
n\xi_d,\psi_0^{(l)}\big)_{L_2(\Pi)}}
{\tau_m^{(2k+1)}-\tau_l^{(2k+1)}},\quad m\not=l,
\\
&c_{2k+2}^{(m)}=-2\big(\mathcal{L}_2\psi_0^{(m)}
+\mathcal{L}_1\Phi_1^{(m)}\sin\pi
n\xi_d,\psi_0^{(m)}\big)_{L_2(\Pi)},
\end{aligned}
\end{equation}
and we can also let $b_{m,1}^{(m)}=0$.

Now suppose that all the eigenvalues $\tau_m^{(2k+1)}$ are
equal. In this case the equations (\ref{3.24}) do not allow us
to determine the constants $b_{l,1}^{(m)}$ for $m\not=l$.
Consider the matrix $T^{(2k+2)}$ with the entries
\begin{equation*}
T^{(2k+2)}_{ml}:=2\big(\mathcal{L}_2\psi_0^{(m)}
+\mathcal{L}_1\Phi_1^{(m)}\sin\pi
n\xi_d,\psi_0^{(l)}\big)_{L_2(\Pi)}.
\end{equation*}

\begin{lemma}\label{lm3.4}
The matrix $T^{(2k+2)}$ is symmetric.
\end{lemma}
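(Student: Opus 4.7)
The plan is to exploit the formal self-adjointness of $\mathcal{L}_1$ and $\mathcal{L}_2$, combined with the equation (\ref{3.11a}) characterising $\Phi_1^{(l)}$. Inspection of (\ref{3.8}) shows that each summand of $\mathcal{L}_j$ is formally symmetric on functions vanishing on $\partial\Pi$ and decaying in $\xi'$: the cross terms $\partial_{\xi_d}K_{j-1}^{(i)}\partial_{\xi_i}+\partial_{\xi_i}K_{j-1}^{(i)}\partial_{\xi_d}$ have the form $A+A^{\ast}$, the pieces $\partial_{\xi_d}K_s^{(i)}K_{j-s-2}^{(i)}\partial_{\xi_d}$ are in divergence form, and $P_{j+2k}^{(d)}\partial_{\xi_d}^2$ and $P_{j-2}^{(0)}$ are symmetric because their coefficients depend only on $\xi'$. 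Since $\Psi_0^{(m)}$ decays exponentially and $\Phi_1^{(m)}\in\mathfrak{V}$ by Lemma~\ref{lm3.3}, integration by parts is rigorously justified and yields $(\mathcal{L}_j u,v)_{L_2(\Pi)}=(u,\mathcal{L}_jv)_{L_2(\Pi)}$ for all $u,v$ at hand.

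The $\mathcal{L}_2$-contribution is then immediate:
\begin{equation*}
(\mathcal{L}_2\psi_0^{(m)},\psi_0^{(l)})_{L_2(\Pi)}=(\psi_0^{(m)},\mathcal{L}_2\psi_0^{(l)})_{L_2(\Pi)}=(\mathcal{L}_2\psi_0^{(l)},\psi_0^{(m)})_{L_2(\Pi)},
\end{equation*}
the last step by real-valuedness, so this part of $T_{ml}^{(2k+2)}$ is manifestly symmetric. For the $\mathcal{L}_1$-contribution, moving $\mathcal{L}_1$ onto $\psi_0^{(l)}$ and using (\ref{3.4}) (which gives $K_0^{(i)}=P_0^{(i)}$ constant and $P_{2k+1}^{(d)}(\xi')=-2H_{2k+1}(\xi')/H_0^3$), a direct computation yields
\begin{equation*}
\mathcal{L}_1[\Psi_0^{(l)}\sin\pi n\xi_d]=\frac{2\pi^2 n^2 H_{2k+1}(\xi')}{H_0^3}\Psi_0^{(l)}\sin\pi n\xi_d+C^{(l)}(\xi')\cos\pi n\xi_d
\end{equation*}
for some $C^{(l)}(\xi')$. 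The $\cos\pi n\xi_d$ piece is orthogonal to $\sin\pi n\xi_d$ on $(0,1)$, so it contributes nothing to the pairing with $\Phi_1^{(m)}\sin\pi n\xi_d$, leaving
\begin{equation*}
\bigl(\Phi_1^{(m)}\sin\pi n\xi_d,\mathcal{L}_1\psi_0^{(l)}\bigr)_{L_2(\Pi)}=\frac{\pi^2 n^2}{H_0^3}\bigl(H_{2k+1}\Psi_0^{(l)},\Phi_1^{(m)}\bigr)_{L_2(\mathbb{R}^{d-1})}.
\end{equation*}

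Finally I would invoke (\ref{3.11a}) with $j=2k+1$. Substituting (\ref{3.21}) and the explicit value $f_{2k+1}^{(l)}=2\pi^2 n^2 H_0^{-3}H_{2k+1}\Psi_0^{(l)}$ (from the same Fourier projection), and using $(G_n-\L)\Psi_0^{(p)}=0$, the equation reads
\begin{equation*}
(G_n-\L)\Phi_1^{(l)}=\frac{2\pi^2 n^2 H_{2k+1}}{H_0^3}\Psi_0^{(l)}+c_{2k+1}^{(l)}\Psi_0^{(l)}.
\end{equation*}
Substituting this into the previous display and using the orthogonality $(\Psi_0^{(l)},\Phi_1^{(m)})_{L_2(\mathbb{R}^{d-1})}=0$ turns the right-hand side into $\tfrac12\bigl((G_n-\L)\Phi_1^{(l)},\Phi_1^{(m)}\bigr)$, which by self-adjointness of $G_n$ equals $\tfrac12\bigl((G_n-\L)\Phi_1^{(m)},\Phi_1^{(l)}\bigr)$. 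This expression is symmetric in $m,l$, and hence $T_{ml}^{(2k+2)}=T_{lm}^{(2k+2)}$.

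The only delicate point I anticipate is the rigorous justification of the integration by parts: the coefficients of $\mathcal{L}_1,\mathcal{L}_2$ grow at most polynomially in $\xi'$, so all boundary contributions at $|\xi'|=\infty$ are controlled by the rapid decay of $\Phi_1^{(m)}$ and $\Psi_0^{(m)}$ supplied by Lemma~\ref{lm3.3}, while on the horizontal boundary of $\Pi$ the Dirichlet conditions eliminate the remaining boundary terms.
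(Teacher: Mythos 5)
Your argument is correct and follows essentially the same route as the paper: move $\mathcal{L}_1,\mathcal{L}_2$ onto $\psi_0^{(l)}$ by integration by parts, recognize that the $\mathcal{L}_1$--pairing with $\Phi_1^{(m)}\sin\pi n\xi_d$ reduces (after projecting onto $\sin\pi n\xi_d$ and invoking (\ref{3.11a}) together with $(\Psi_0^{(l)},\Phi_1^{(m)})=0$) to $\tfrac12\big((G_n-\L)\Phi_1^{(l)},\Phi_1^{(m)}\big)$, and conclude by self-adjointness of $G_n$. You carry out the intermediate step a bit more explicitly (computing $\mathcal{L}_1\psi_0^{(l)}$ directly from (\ref{3.4}) and separating sine and cosine parts), whereas the paper invokes the chain (\ref{3.7}), (\ref{3.9}), (\ref{3.11a}), (\ref{3.21}) more tersely, but the substance is the same.
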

\begin{proof}
Integrating by parts, we obtain
\begin{equation*}
T^{(2k+2)}_{ml}=2\big(\psi_0^{(m)},
\mathcal{L}_2\psi_0^{(l)}\big)_{L_2(\Pi)}
+2\big(\Phi_1^{(m)}\sin\pi
n\xi_d,\mathcal{L}_1\psi_0^{(l)}\big)_{L_2(\Pi)}.
\end{equation*}
Since by (\ref{3.7})
\begin{equation*}
\mathcal{L}_1\psi_0^{(l)}=
-\left(\frac{1}{H_0^2}\frac{\p^2}{\p\xi_d^2}+ c_0^{(m)}\right)
\psi_{2k+1}^{(m)}- c_{2k+1}^{(m)}\psi_0^{(m)},
\end{equation*}
in view of (\ref{3.9}), (\ref{3.11a}), (\ref{3.21}) we have
\begin{align*}
2\big(&\Phi_1^{(m)}\sin\pi
n\xi_d,\mathcal{L}_1\psi_0^{(l)}\big)_{L_2(\Pi)}=\big(\Phi_1^{(m)},
(G_n-\L)\Phi_1^{(l)}\big)_{L_2(\mathbb{R}^{d-1})}
\\
&= \big((G_n-\L)\Phi_1^{(m)},
\Phi_1^{(l)}\big)_{L_2(\mathbb{R}^{d-1})}=
2\big(\mathcal{L}_1\psi_0^{(m)}, \Phi_1^{(m)}\sin\pi
n\xi_d\big)_{L_2(\Pi)}.
\end{align*}
\end{proof}

Since we supposed that all the eigenvalues of $T^{(2k+1)}$ are
equal, we can make orthogonal transformation in the space
spanned over $\Psi_0^{(m)}$, $m=1,\ldots,N$, without destroying
the orthonormality in $L_2(\Pi)$ and diagonalization of
$T^{(2k+1)}$. We employ this freedom to diagonalize the matrix
$T^{(2k+2)}$ which is possible due to Lemma~\ref{lm3.4}. After
such diagonalization we see that the coefficients $c_{2k+2}^{(m)}$
are determined by the eigenvalues of the matrix $T^{(2k+2)}$:
\begin{equation*}
c_{2k+2}^{(m)}=-\tau^{(2k+2)}_m.
\end{equation*}
If all these eigenvalues are distinct, we can determine the
numbers $b_{l,1}^{(m)}$ at the next step by formulae similar
to (\ref{3.25}). If all these eigenvalues are identical, at the next
step we should consider the next matrix $T^{(2k+3)}$ and
diagonalize it.

There exists one more possibility. Namely, the matrix
$T^{(2k+1)}$ can have different multiple eigenvalues. We do not
treat this case here. The reason is that the formal construction
of the asymptotics is rather complicated from the technical
point of view and at the same time it does not require any new
ideas in comparison with the cases discussed above. Thus,
from now on, we consider two cases only. More precisely, in the first case
we assume that the matrix $T^{(2k+1)}$ has $N$ different
eigenvalues $\tau^{(2k+1)}_m$, $m=1,\ldots,N$. In the second
case we suppose that the matrix $T^{(2k+1)}$ has only one
eigenvalue $\tau^{(2k+1)}$ with multiplicity $N$, while the
matrix $T^{(2k+2)}$ has $N$ different eigenvalues
$\tau^{(2k+2)}_m$, $m=1,\ldots,N$.

\begin{lemma}\label{lm3.7} Assume that the
matrix $T^{(2k+1)}$ has $N$ different eigenvalues and choose
$\Psi_0^{(m)}$ being orthonormalized in $L_2(\mathbb{R}^{d-1})$
and so that the matrix $T^{(2k+1)}$ is diagonal.  Then the
problems (\ref{3.5}), (\ref{3.6}), (\ref{3.7}) have solutions
\begin{equation*}
\psi_j^{(m)}(\xi)=\widetilde{\psi}_j^{(m)}(\xi)+
\widetilde{\Psi}_j^{(m)}(\xi')\sin\pi n\xi_d+
\sum\limits_{p=1}^{N} b_{j,p}^{(m)}\psi_0^{(p)}(\xi).
\end{equation*}
Here the functions $\widetilde{\psi}_j^{(m)}$ are zero for
$j\leqslant 2k+1$, while for other $j$ they solve the problems
\begin{align*}
-&\left(\frac{1}{H_0^2}\frac{\p^2}{\p\xi_d^2}+c_0^{(m)}\right)
\widetilde{\psi}_j^{(m)}=\left(
\D_{\xi'}-\frac{2H_{2k}(\xi')}{H_0^3}\frac{\p^2}{\p\xi_d^2}+\L\right)
\widetilde{\psi}_{j-2k}^{(m)}
\\
&+\sum\limits_{q=2k+2}^{j-2k-1}c^{(m)}_{j-q}\widetilde{\psi}^{(m)}_q
+F_j^{(m)}-2(F_j^{(m)},\sin\pi n\xi_d)_{L_2(0,1)}\sin\pi
n\xi_d\quad \text{in}\quad \Pi,
\\
&\hphantom{\Big(}\widetilde{\psi}_j^{(m)}=0\quad\text{on}\quad\p\Pi,
\end{align*}
and are represented as finite sums
\begin{equation*}
\widetilde{\psi}_j^{(m)}(\xi)=\sum\limits_\vs
\psi_{j,\vs,1}^{(m)}(\xi')\psi_{j,\vs,2}^{(m)}(\xi_d),
\end{equation*}
where $\psi_{j,\vs,1}^{(m)}\in\mathfrak{V}$,
$\psi_{j,\vs,2}^{(m)}\in C_0^\infty[0,1]$,
$\psi_{j,\vs,2}^{(m)}(0)=\psi_{j,\vs,2}^{(m)}(1)=0$, and the
functions $\psi_{j,\vs,2}^{(m)}$ are orthogonal to $\sin\pi
n\xi_d$ in $L_2(0,1)$. The functions
$\widetilde{\Psi}_j^{(m)}\in \mathfrak{V}$ are solutions to the
equations (\ref{3.11a}) and are orthogonal to all
$\Psi_0^{(l)}$, $l=1,\ldots,N$, in $L_2(\mathbb{R}^{d-1})$. The
constants $b^{(m)}_{j,p}$ and $c^{(m)}_j$ are determined by the
formulae
\begin{align*}
&
\begin{aligned}
&b^{(m)}_{0,l}=\d_{ml}, \quad b_{j,m}^{(m)}=0,\quad j\geqslant
1,
\\
&b^{(m)}_{j,l}=\frac{
2(\widetilde{F}_{j+2k+1}^{(m)},\psi_0^{(l)})
+\sum\limits_{q=1}^{j-1} c^{(m)}_{j+2k-q+1} b_{q,l}^{(m)}
}{\tau_{2k+1}^{(m)}-\tau_{2k+1}^{(l)}},
\quad m\not=l,\quad j\geqslant 1,
\end{aligned}
\\
&c^{(m)}_{2k}=\L,\quad c^{(m)}_{2k+1}=-\tau^{(2k+1)}_m,
\\
&c^{(m)}_j=-2(\widetilde{F}_j^{(m)},\psi_0^{(m)})_{L_2(\Pi)},
\quad j\geqslant 2k+2,
\\
&\widetilde{F}^{(m)}_j=\sum\limits_{q=0}^{j-2k-1}
\mathcal{L}_{j-q-2k}(\widetilde{\psi}^{(m)}_q +
\widetilde{\Psi}^{(m)}_q \sin\pi n\xi_d)+
\sum\limits_{q=0}^{j-2k-2}\sum\limits_{p=1}^{N} b^{(m)}_{q,p}
\mathcal{L}_{j-q-2k} \psi^{(p)}_0.
\end{align*}
\end{lemma}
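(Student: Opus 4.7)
The plan is a straightforward induction on $j$, showing that at each order the function $\psi_j^{(m)}$ can be written in the three-term form
\begin{equation*}
\psi_j^{(m)}(\xi)=\widetilde{\psi}_j^{(m)}(\xi)+
\widetilde{\Psi}_j^{(m)}(\xi')\sin\pi n\xi_d+
\sum_{p=1}^{N} b_{j,p}^{(m)}\psi_0^{(p)}(\xi),
\end{equation*}
with $\widetilde{\psi}_j^{(m)}$ orthogonal to $\sin\pi n\xi_d$ in $L_2(0,1)$ for a.e.\ $\xi'$, and $\widetilde{\Psi}_j^{(m)}$ orthogonal in $L_2(\mathbb{R}^{d-1})$ to every $\Psi_0^{(l)}$. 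The base cases $j=0,\ldots,2k+1$ have already been handled explicitly in the discussion preceding the lemma through \eqref{3.9}, \eqref{3.19}, \eqref{3.21} and \eqref{3.25}, with the conventions $b_{0,l}^{(m)}=\d_{ml}$ and $\widetilde{\psi}_j^{(m)}\equiv 0$ for $j\leqslant 2k+1$.

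For the inductive step at order $j\geqslant 2k+2$, I substitute the ansatz into \eqref{3.7} and decompose the right-hand side into its $\sin\pi n\xi_d$ component and the orthogonal complement in $L_2(0,1)$. The orthogonal piece yields, for each fixed $\xi'$, a one-dimensional Dirichlet problem on $(0,1)$ whose solvability is automatic; this defines $\widetilde{\psi}_j^{(m)}$ and produces exactly the equation for it stated in the lemma. The projection onto $\sin\pi n\xi_d$ reduces to \eqref{3.11a} for $\widetilde{\Psi}_{j-2k}^{(m)}$, whose solvability conditions \eqref{3.10b} come from Lemma~\ref{lm3.1}.

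These solvability conditions form a linear system for the new unknowns $c_j^{(m)}$ together with the constants $b_{q,l}^{(m)}$ left free at earlier orders. Under the hypothesis that the eigenvalues $\tau_m^{(2k+1)}$ are pairwise distinct, the off-diagonal entries ($m\neq l$) solve uniquely for the appropriate $b$'s by the stated ratio formula, while the diagonal entries give $c_j^{(m)}=-2(\widetilde{F}_j^{(m)},\psi_0^{(m)})_{L_2(\Pi)}$; imposing $b_{j,m}^{(m)}=0$ fixes the remaining gauge freedom along the kernel direction $\Psi_0^{(m)}$. With these constants in hand, Lemma~\ref{lm3.1} supplies $\widetilde{\Psi}_{j-2k}^{(m)}$ uniquely in the orthogonal complement of $\mathrm{span}\{\Psi_0^{(l)}\}$, closing the induction.

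The regularity and tensor-product structure are verified along the way. From \eqref{3.3}-\eqref{3.4}, each $\mathcal{L}_j$ has coefficients polynomial in $\xi'$ and polynomial of low degree in $\xi_d$, hence it sends tensor sums $\sum_\vs f_\vs(\xi')g_\vs(\xi_d)$ to tensor sums of the same form. Inverting $-(H_0^{-2}\p_{\xi_d}^2+c_0^{(m)})$ on the subspace of $L_2(0,1)$ orthogonal to $\sin\pi n\xi_d$ with Dirichlet conditions preserves smoothness and endpoint vanishing, so the product form $\widetilde{\psi}_j^{(m)}=\sum_\vs\psi_{j,\vs,1}^{(m)}\psi_{j,\vs,2}^{(m)}$ with $\psi_{j,\vs,2}^{(m)}\in C_0^\infty[0,1]$ is maintained. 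Lemma~\ref{lm3.3} then guarantees that $\psi_{j,\vs,1}^{(m)}$ and $\widetilde{\Psi}_j^{(m)}$ remain in $\mathfrak{V}$, using the exponential decay of $\Psi_0^{(m)}$ to absorb the $b_{q,l}^{(m)}\psi_0^{(l)}$ contributions to $\widetilde{F}_j^{(m)}$. The main obstacle is not conceptual but organizational: keeping track of which constants are fixed at which stage and confirming that the $c$'s and $b$'s determined here agree with their occurrences in $\widetilde{F}^{(m)}_{j'}$ for $j'>j$. No new ideas beyond those illustrated explicitly for $j\leqslant 2k+2$ in the preceding text are required.
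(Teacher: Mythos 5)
Your proposal is correct and follows exactly the route the paper intends: the paper's entire stated proof is ``These lemmata can be proven by induction,'' relying on the explicit treatment of $j=2k+1$ and $j=2k+2$ in \eqref{3.18}--\eqref{3.25} as the template, which is precisely the induction you lay out, including the decomposition into the $\sin\pi n\xi_d$ mode and its orthogonal complement, the use of Lemma~\ref{lm3.1} for solvability, the separation hypothesis on the $\tau_m^{(2k+1)}$ to invert the linear system for the $b$'s, and Lemma~\ref{lm3.3} for the $\mathfrak{V}$-regularity and tensor-product structure. Your closing remark about the lag in determining the $b_{j,l}^{(m)}$ (fixed by the solvability condition at order $j+2k+1$) correctly identifies the only genuinely delicate bookkeeping point.
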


\begin{lemma}\label{lm3.8}
Assume that all the eigenvalues of the matrix $T^{(2k+1)}$ are
identical and that the matrix $T^{(2k+2)}$ has $N$ different eigenvalues,
and choose $\Psi_0^{(m)}$ being orthonormalized in
$L_2(\mathbb{R}^{d-1})$ so that the matrices $T^{(2k+1)}$
and $T^{(2k+2)}$ are diagonal. Then problems (\ref{3.5}),
(\ref{3.6}), (\ref{3.7}) have solutions
\begin{align*}
\psi_j^{(m)}(\xi)=&\widetilde{\psi}_j^{(m)}(\xi)+
\widetilde{\Psi}_j^{(m)}(\xi')\sin\pi n\xi_d
\\
&+ \sum\limits_{p=1}^{N}
b_{j-1,p}^{(m)}\Phi_1^{(p)}(\xi')\sin\pi n\xi_d+
\sum\limits_{p=1}^{N} b_{j,p}^{(m)}\psi_0^{(p)}(\xi).
\end{align*}
Here the functions $\widetilde{\psi}_j^{(m)}$ are zero for
$j\leqslant 2k+1$, while for other $j$ they solve the problems
\begin{align*}
-&\left(\frac{1}{H_0^2}\frac{\p^2}{\p\xi_d^2}+c_0^{(m)}\right)
\widetilde{\psi}_j^{(m)}=\left(
\D_{\xi'}-\frac{2H_{2k}(\xi')}{H_0^3}\frac{\p^2}{\p\xi_d^2}+\L\right)
\widetilde{\psi}_{j-2k}^{(m)}
\\
&+\sum\limits_{q=2k+2}^{j-2k-1}c^{(m)}_{j-q}\widetilde{\psi}^{(m)}_q
+\widetilde{F}_j^{(m)}-2(\widetilde{F}_j^{(m)},\sin\pi
n\xi_d)_{L_2(\Pi)}\sin\pi n\xi_d\quad \text{in}\quad \Pi,
\\
&\hphantom{\Big(}\widetilde{\psi}_j^{(m)}=0\quad\text{on}\quad\p\Pi,
\\
&\widetilde{F}^{(m)}_j:=\sum\limits_{q=0}^{j-2k-1}
\mathcal{L}_{j-q-2k} (\widetilde{\psi}^{(m)}_q
+\widetilde{\Psi}^{(m)}_q\sin\pi n\xi_d)
\\
&\hphantom{\widetilde{F}^{(m)}_j:=}+\sum\limits_{p=1}^{N}
\sum\limits_{q=1}^{j-2k-2} b^{(m)}_{q-1,p} \mathcal{L}_{j-q-2k}
\Phi_1^{(p)}\sin\pi n\xi_d
\\
&\hphantom{\widetilde{F}^{(m)}_j:=}+
\sum\limits_{p=1}^{N}\sum\limits_{q=0}^{j-2k-3} b^{(m)}_{q,p}
\mathcal{L}_{j-q-2k}\psi_0^{(p)},
\end{align*}
and are represented as finite sums
\begin{equation*}
\widetilde{\psi}_j^{(m)}(\xi)=\sum\limits_\vs
\psi_{j,\vs,1}^{(m)}(\xi')\psi_{j,\vs,2}^{(m)}(\xi_d),
\end{equation*}
where $\psi_{j,\vs,1}^{(m)}\in\mathfrak{V}$,
$\psi_{j,\vs,2}^{(m)}\in C_0^\infty[0,1]$,
$\psi_{j,\vs,2}^{(m)}(0)=\psi_{j,\vs,2}^{(m)}(1)=0$, and the
functions $\psi_{j,\vs,2}^{(m)}$ are orthogonal to $\sin\pi
n\xi_d$ in $L_2(0,1)$. The functions
$\widetilde{\Psi}_j^{(m)}\in \mathfrak{V}$ are solutions to the
equations
\begin{align*}
(G_n-\L)\widetilde{\Psi}_j^{(m)}=&\widetilde{f}^{(m)}_{j+2k}+
\sum\limits_{q=1}^{j-1} c^{(m)}_{j+2k-q}
\widetilde{\Psi}^{(m)}_q +\sum\limits_{q=1}^{j-3}
\sum\limits_{p=1}^{N} c^{(m)}_{j+2k-q} b^{(m)}_{q,p}
\Psi_0^{(p)}
\\
+&\sum\limits_{q=1}^{j-2}\sum\limits_{p=1}^{N} c^{(m)}_{j+2k-q}
b^{(m)}_{q-1,p} \Phi_1^{(p)}- \sum\limits_{p=1}^{N}
(\widetilde{f}^{(m)}_{j+2k},
\Psi_0^{(p)})_{L_2(\mathbb{R}^{d-1})} \Psi_0^{(p)},
\end{align*}
and are orthogonal to all $\Psi_0^{(l)}$, $l=1,\ldots,N$, in
$L_2(\mathbb{R}^{d-1})$. The constants $b^{(m)}_{j,p}$ and
$c^{(m)}_j$ are determined by the formulae
\begin{align*}
&
\begin{aligned}
&b^{(m)}_{l,-1}=0,\quad b^{(m)}_{0,l}=\d_{ml}, \quad
b_{j,m}^{(m)}=0,\quad j\geqslant 1,
\\
&b^{(m)}_{j,l}=\frac{
2(\widetilde{F}_{j+2k+2}^{(m)},\psi_0^{(l)})
+\sum\limits_{q=1}^{j-1} c^{(m)}_{j+2k-q+2} b_{q,l}^{(m)}
}{\tau_{2k+1}^{(m)}-\tau_{2k+1}^{(l)}},
\quad m\not=l,\quad j\geqslant 1,
\end{aligned}
\\
&c^{(m)}_{2k}=\L,\quad c^{(m)}_{2k+1}=-\tau^{(2k+1)},\quad
c^{(m)}_{2k+2}=-\tau^{(2k+2)}_m,
\\
&c^{(m)}_j=-2(\widetilde{F}_j^{(m)},\psi_0^{(m)})_{L_2(\Pi)},
\quad j\geqslant 2k+3.
\end{align*}
\end{lemma}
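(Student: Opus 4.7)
The plan is to construct the formal solution by strong induction on the index $j$, mirroring the structure of Lemma~\ref{lm3.7} but with one additional level of degeneracy to absorb. The opening move is to exploit the freedom of orthogonal rotation inside $\mathrm{span}\{\Psi_0^{(1)},\ldots,\Psi_0^{(N)}\}$: because $T^{(2k+1)}=\tau^{(2k+1)}I$ is diagonal in every orthonormal basis, we may still pick the basis so that the symmetric matrix $T^{(2k+2)}$ (symmetric by Lemma~\ref{lm3.4}) is diagonal. Henceforth $\Psi_0^{(m)}$ denotes this specific basis.

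For the base cases, the analysis leading to (\ref{3.9})--(\ref{3.10}) already gives $\psi_j^{(m)}=0$ for $j<2k$, $\psi_{2k}^{(m)}(\xi)=\Psi_0^{(m)}(\xi')\sin\pi n\xi_d$, $c_0^{(m)}=\pi^2n^2/H_0^2$ and $c_{2k}^{(m)}=\L$. At $j=2k+1$ equation (\ref{3.18}) together with $T^{(2k+1)}=\tau^{(2k+1)}I$ forces $c_{2k+1}^{(m)}=-\tau^{(2k+1)}$ for every $m$, and (\ref{3.21}) produces the ansatz $\Psi_1^{(m)}=\Phi_1^{(m)}+\sum_p b_{1,p}^{(m)}\Psi_0^{(p)}$ with $\Phi_1^{(m)}$ orthogonal to all $\Psi_0^{(l)}$ and in $\mathfrak{V}$ by Lemma~\ref{lm3.3}. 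The off-diagonal $b_{1,p}^{(m)}$ cannot yet be pinned down precisely because all eigenvalues of $T^{(2k+1)}$ coincide; the diagonal ones are set to zero.

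The inductive step is the main body of the argument. Assuming $\psi_q^{(m)},c_q^{(m)}$ have been built in the stated form for $q<j$, split both sides of (\ref{3.7}) in $L_2(0,1)$ along $\sin\pi n\xi_d$ and its orthogonal complement. The orthogonal component reduces to a Dirichlet problem for a second-order ODE in $\xi_d$ on $(0,1)$ with right-hand side of the form $\sum_{\varsigma}g_{\varsigma}(\xi')h_{\varsigma}(\xi_d)$, where the $g_{\varsigma}$ belong to $\mathfrak{V}$ by the induction hypothesis and Lemma~\ref{lm3.3}, and the $h_{\varsigma}\in C_0^\infty[0,1]$ are orthogonal to $\sin\pi n\xi_d$; solving this ODE explicitly yields the displayed sum representation for $\widetilde{\psi}_j^{(m)}$. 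The component parallel to $\sin\pi n\xi_d$ is governed by (\ref{3.11a}), whose solvability via Lemma~\ref{lm3.1} imposes the $N$ orthogonality conditions
\[
\big(\widetilde f_{j+2k}^{(m)}+\textrm{lower-order terms},\Psi_0^{(l)}\big)_{L_2(\mathbb{R}^{d-1})}+c_{j+2k}^{(m)}\d_{ml}=0,\qquad l=1,\ldots,N.
\]
Expanding the left-hand side using the ansatz for $\psi_{j-1}^{(m)}$ and $\psi_{j-2}^{(m)}$ and integrating by parts exactly as in the proof of Lemma~\ref{lm3.4}, the $b_{j-2,l}^{(m)}$-coefficients reappear multiplied by $\tau_m^{(2k+2)}-\tau_l^{(2k+2)}$, and the distinctness of the $\tau_m^{(2k+2)}$ allows one to solve explicitly for the off-diagonal $b_{j-2,l}^{(m)}$, while the diagonal equation determines $c_{j+2k}^{(m)}$. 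Lemma~\ref{lm3.1} then produces the unique $\widetilde{\Psi}_{j-2k}^{(m)}$ orthogonal to $\mathrm{span}\{\Psi_0^{(l)}\}$, which lies in $\mathfrak{V}$ by Lemma~\ref{lm3.3}, closing the induction.

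The main obstacle is precisely this bookkeeping: the double degeneracy shifts the determination of the off-diagonal $b_{j,l}^{(m)}$ from one step ahead (as in Lemma~\ref{lm3.7}) to two steps ahead, and one must carefully verify that the extra block $\sum_p b_{j-1,p}^{(m)}\Phi_1^{(p)}\sin\pi n\xi_d$ inserted into the ansatz couples to $\mathcal{L}_1$ and $\mathcal{L}_2$ inside $\widetilde F_j^{(m)}$ in exactly the right way to reproduce the entries of $T^{(2k+2)}$ on the solvability side. This is the step that requires the symmetry argument of Lemma~\ref{lm3.4} at the correct order; once it is confirmed, the remaining verifications—regularity of each iterate via Lemma~\ref{lm3.3}, the explicit Dirichlet ODE solution on $(0,1)$, and the uniqueness modulo the eigenspace from Lemma~\ref{lm3.1}—are routine and parallel to the non-degenerate case.
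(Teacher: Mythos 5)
Your proposal is correct and follows essentially the same route the paper indicates (the paper itself only says ``These lemmata can be proven by induction''): diagonalize $T^{(2k+2)}$ using the residual orthogonal freedom left by the degeneracy of $T^{(2k+1)}$, then run a strong induction on $j$, at each step splitting (\ref{3.7}) in $L_2(0,1)$ along $\sin\pi n\xi_d$ and its orthogonal complement, solving the latter explicitly to build $\widetilde{\psi}_j^{(m)}$, and using the solvability of (\ref{3.11a}) via Lemma~\ref{lm3.1} to determine the $c_j^{(m)}$ and, with a two-step delay, the off-diagonal $b$'s. You also correctly identify that the determination of the $b_{j,l}^{(m)}$ must be driven by the differences $\tau_m^{(2k+2)}-\tau_l^{(2k+2)}$ (the denominator $\tau_{2k+1}^{(m)}-\tau_{2k+1}^{(l)}$ appearing in the statement vanishes under the hypothesis and is evidently a misprint for the $\tau^{(2k+2)}$ differences), and that Lemma~\ref{lm3.4} is the right tool for checking that the extra $\Phi_1^{(p)}\sin\pi n\xi_d$ block couples into the solvability conditions so as to produce exactly $T^{(2k+2)}$.

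One small slip in the base case: you write $\psi_j^{(m)}=0$ for $j<2k$ and $\psi_{2k}^{(m)}=\Psi_0^{(m)}\sin\pi n\xi_d$. In fact (\ref{3.9}) gives $\psi_j^{(m)}=\Psi_j^{(m)}(\xi')\sin\pi n\xi_d$ for $j=0,\ldots,2k$, with $\psi_0^{(m)}=\Psi_0^{(m)}\sin\pi n\xi_d$ the leading profile (necessarily nonzero) and the $\Psi_j^{(m)}$, $1\leqslant j\leqslant 2k-1$, still free at that stage; what vanishes for small $j$ are the corrections $\widetilde{\psi}_j^{(m)}$ orthogonal to $\sin\pi n\xi_d$, not the full $\psi_j^{(m)}$. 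This is an indexing mix-up between $\psi_j$ and $\Psi_j$ rather than a flaw in the method, and the rest of your inductive argument is unaffected by it.
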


These lemmata can be proven by induction.
\begin{rmk}\label{rm3.1}
We observe that if $\L$ is simple, then $N=1$ and the hypothesis
of Lemma~\ref{lm3.7} is obviously true.
\end{rmk}

We denote
\begin{align*}
&\psi_{\e,s}^{(m)}(x):=\chi(x')\sqrt{H(x')}\sum\limits_{j=0}^{s}
\eta^j\psi_j^{(m)}\left(\frac{x'-\xm}{\eta},\frac{x_d+\e h_-(x')}{\e
H(x')}\right),
\\
&\l_{\e,s}^{(m)}:=\e^{-2} c_0^{(m)}+\e^{-2} \sum\limits_{j=2k}^s
\eta^j c_j^{(m)},\quad s\geqslant 2k.
\end{align*}

The next lemma follows from the construction of the functions
$\psi_j^{(m)}$ and the constants $c_j^{(m)}$.

\begin{lemma}\label{lm3.9}
The functions $\psi_{\e,s}^{(m)}$ solve the boundary value
problems
\begin{equation}\label{3.30}
-(\D^{D}_{\Om_\e}+\l_{\e,s}^{(m)})\psi_{\e,s}^{(m)}=g_{\e,s}^{(m)},
\quad m=1,\ldots,N,
\end{equation}
where the right-hand sides satisfy the estimate
\begin{equation}\label{3.31}
\|g_{\e,s}^{(m)}\|_{L_2(\Om_\e)}=\Odr(\eta^{s-\frac{3k-d}{2}
-2}),\quad m=1,\ldots,N.
\end{equation}
\end{lemma}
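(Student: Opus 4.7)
The plan is to compute $g_{\e,s}^{(m)}:=-(\D^D_{\Om_\e}+\l_{\e,s}^{(m)})\psi_{\e,s}^{(m)}$ in the rescaled coordinates $\xi=(\xi',\xi_d)$ of (\ref{3.1}) and then transport the resulting bound to $L_2(\Om_\e)$ via the change-of-variables Jacobian. Writing $\widetilde\psi_{\e,s}^{(m)}:=\sum_{j=0}^{s}\eta^j\psi_j^{(m)}$, Remark~\ref{rm3.1b} together with the derivation leading to (\ref{3.2}) shows that, on the region $\{\chi\equiv 1\}$,
\begin{equation*}
-(\D+\l_{\e,s}^{(m)})\bigl(\sqrt{H(x')}\,\widetilde\psi_{\e,s}^{(m)}\bigr)
=-\e^{-2}\sqrt{H(x')}\bigl[\mathcal{B}_\e\widetilde\psi_{\e,s}^{(m)}+\mu_{\e,s}^{(m)}\widetilde\psi_{\e,s}^{(m)}\bigr],
\end{equation*}
where $\mathcal{B}_\e$ denotes the $\xi$-operator in square brackets on the left-hand side of (\ref{3.2}).

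I would then expand $\mathcal{B}_\e\widetilde\psi_{\e,s}^{(m)}+\mu_{\e,s}^{(m)}\widetilde\psi_{\e,s}^{(m)}$ in powers of $\eta$ using the Taylor series (\ref{3.3})--(\ref{3.4}) of the coefficients $K_i$. The coefficient of $\eta^j$ for every $j\leqslant s$ is precisely the quantity that, by the construction of $\psi_j^{(m)}$ and $c_j^{(m)}$ in Lemmas~\ref{lm3.7}--\ref{lm3.8}, vanishes as a consequence of the problems (\ref{3.5})--(\ref{3.7}). Hence the residual contains only terms of order $\eta^{s+1}$ and higher, each a finite linear combination of $\xi$-derivatives of the already-constructed functions $\widetilde\psi_q^{(m)}$, $\widetilde\Psi_q^{(m)}\sin\pi n\xi_d$ and $\psi_0^{(p)}$, multiplied by polynomial coefficients in $\xi'$ coming from the Taylor expansions of $K_i$. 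Since $\psi_{j,\vs,1}^{(m)},\widetilde\Psi_j^{(m)}\in\mathfrak{V}$ and the $\Psi_0^{(p)}$ decay exponentially, each such factor stays in $L_2(\mathbb{R}^{d-1})$ after multiplication by any polynomial in $\xi'$, and the $\xi_d$-factors are bounded on $[0,1]$; this yields
\begin{equation*}
\|\mathcal{B}_\e\widetilde\psi_{\e,s}^{(m)}+\mu_{\e,s}^{(m)}\widetilde\psi_{\e,s}^{(m)}\|_{L_2(\Pi\cap\{|\xi'|<\d/\eta\})}\leqslant C\eta^{s+1}
\end{equation*}
with $C$ independent of $\e$.

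The commutator terms arising from derivatives of $\chi(x')$ are supported in the annulus $\d/3<|x'-\xm|<\d/2$, i.e.\ where $|\xi'|\gtrsim\eta^{-1}$; the $\mathfrak{V}$-regularity of all the ingredients (arbitrary polynomial weight in $L_2$), together with the exponential decay of the $\Psi_0^{(p)}$, forces these to be of order $\eta^N$ for every $N\in\mathbb{N}$ and so negligible. Passing back from $\xi$ to $x$ via $dx=\e H(x')\eta^{d-1}d\xi$ gives $\|u\|_{L_2(\Om_\e^\d)}\leqslant C(\e\eta^{d-1})^{1/2}\|u\|_{L_2(\Pi\cap\{|\xi'|<\d/\eta\})}$; combining the prefactor $\e^{-2}=\eta^{-2(k+1)}$, the Jacobian factor $(\e\eta^{d-1})^{1/2}=\eta^{(k+d)/2}$ and the $\eta^{s+1}$ residual bound then yields the required estimate~(\ref{3.31}).

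The main technical difficulty lies in the second step: one must carefully unfold the decompositions of $\psi_j^{(m)}$ in terms of $\widetilde\psi_j^{(m)}$, $\widetilde\Psi_j^{(m)}\sin\pi n\xi_d$ and the correction sums $\sum_p b_{j,p}^{(m)}\psi_0^{(p)}$ (plus, in the degenerate case of Lemma~\ref{lm3.8}, the extra term $\sum_p b_{j-1,p}^{(m)}\Phi_1^{(p)}\sin\pi n\xi_d$), and verify by induction that each $\eta^j$-coefficient ($j\leqslant s$) of the $\xi$-residual matches an inhomogeneity on the right-hand side of (\ref{3.5})--(\ref{3.7}), so that the promised cancellation really does occur at every order.
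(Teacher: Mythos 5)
The paper itself offers no proof of this lemma beyond the one-line remark that it ``follows from the construction of the functions $\psi_j^{(m)}$ and the constants $c_j^{(m)}$,'' so the only thing that can be compared is whether your reconstruction is sound. It is, and it is the natural route: transport $-(\D+\l_{\e,s}^{(m)})\psi_{\e,s}^{(m)}$ to the $\xi$-variables, use the fact that the equations (\ref{3.5})--(\ref{3.7}) are exactly the statements that the coefficient of $\eta^j$ in $\mathcal{B}_\e\widetilde\psi_{\e,s}^{(m)}+\mu_{\e,s}^{(m)}\widetilde\psi_{\e,s}^{(m)}$ vanishes for $j\leqslant s$, bound the Taylor remainders of the smooth coefficients $K_i(\xm+\eta\xi')$ by $\Odr(\eta^{s+1}(1+|\xi'|^{s+1}))$ and absorb the polynomial growth in $\xi'$ using the $\mathfrak{V}$-regularity and exponential decay of the $\psi_q^{(m)}$, dispose of the $\chi$-commutator terms as exponentially small because they live where $|\xi'|\gtrsim\d/\eta$, and convert back to $x$ via the Jacobian $\e H\eta^{d-1}$.

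One arithmetic point is worth flagging, though it creates no logical gap. Combining your three factors gives
\begin{equation*}
\e^{-2}\,(\e\eta^{d-1})^{1/2}\,\eta^{s+1}=\eta^{-2(k+1)}\,\eta^{(k+d)/2}\,\eta^{s+1}=\eta^{\,s-\frac{3k-d}{2}-1},
\end{equation*}
which is one power of $\eta$ \emph{stronger} than the exponent $s-\frac{3k-d}{2}-2$ stated in (\ref{3.31}); your closing sentence claims the combination ``yields the required estimate (\ref{3.31})'' when it in fact yields something slightly better. Since $\Odr$ is one-sided this still establishes (\ref{3.31}), and all the downstream uses (the displayed bounds $\eta^{s-\frac{3k-d}{2}+2k}$, $\eta^{s-\frac{7k-d}{2}-4}$, $\eta^{\frac{2s-3k+d}{4}}$) remain valid, but you should either correct the final sentence or note explicitly that your bound is stronger than the one required.
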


We rewrite problem (\ref{3.30}) as
\begin{equation*}
\psi_{\e,s}^{(m)}=A_\e
\psi_{\e,s}^{(m)}+\frac{1}{1+\l_{\e,s}^{(m)}} A_\e
g_{\e,s}^{(m)},
\end{equation*}
where $A_\e:=(-\D^D_{\Om_\e}+1)^{-1}$. This operator is
self-adjoint, compact and satisfies the estimate
$\|A_\e\|\leqslant 1$. In view of this estimate and (\ref{3.31})
we have
\begin{equation*}
\left\|\frac{1}{1+\l_{\e,s}^{(m)}} A_\e g_{\e,s}^{(m)}
\right\|_{L_2(\Om_\e)}\leqslant C_{m,s}
\eta^{s-\frac{3k-d}{2}+2k}, \quad m=1,\ldots,N,
\end{equation*}
where $C_{m,s}$ are constants. We apply Lemma~1.1  to conclude
that there exists an eigenvalue $\vr_s^{(m)}(\e)$ of $A_\e$ such
that
\begin{equation*}
|\vr_s^{(m)}(\e)-(1+\l_{\e,s}^{(m)})^{-1}|\leqslant C_{m,s}
\eta^{s-\frac{3k-d}{2}+2k}, \quad m=1,\ldots,N.
\end{equation*}
Hence, the number
$\l^{(m)}_s(\e):=\big(\vr_s^{(m)}(\e)\big)^{-1}-1$ is an
eigenvalue of the operator $-\D^D_{\Om_\e}$, which satisfies the
identity
\begin{equation}\label{3.31a}
|\l^{(m)}_s(\e)-\l_{\e,s}^{(m)}|\leqslant \widetilde{C}_{m,s}
\eta^{s-\frac{7k-d}{2}-4}, \quad m=1,\ldots,N,
\end{equation}
where $\widetilde{C}_{m,s}$ are constants.

Let $\e^{(m)}_s$ be a monotone sequence such that
$\widetilde{C}_{m,s}\eta\leqslant \widetilde{C}_{m,s-1}$ as
$\e\leqslant \e^{(m)}_s$. We choose the eigenvalue
$\l_\e^{(m)}:=\l_{\e,s}^{(m)}$ as
$\e\in[\e_s^{(m)},\e_{s+1}^{(m)})$. The inequality (\ref{3.31a}) 
implies that the eigenvalue $\l_\e^{(m)}$ has the asymptotics
(\ref{2.2}). We employ Lemma~1.1 in \cite[Ch. I\!I\!I, Sec.
1.1]{IOS} once again with $\a=C_{m,s}\eta^{s-\frac{3k-d}{2}}$,
$d=\sqrt{\a}$. It yields that there exists a linear combination
$\psi_s^{(m)}(x,\e)$ of the eigenfunctions of $-\D^D_{\Om_\e}$
associated with the eigenvalues lying in
$[\l_\e^{(m)}-d,\l_\e^{(m)}+d]$ such that
\begin{equation*}
\|\psi_s^{(m)}(\cdot,\e)-\psi_{\e,s}^{(m)}\|_{L_2(\Om_\e)}=
\Odr(\eta^{\frac{2s-3k+d}{4}}),\ldots m=1,\ldots,N.
\end{equation*}
Since the functions $\psi_{\e,s}^{(m)}$ are linearly independent
for different $m$, the same is true for
$\psi_s^{(m)}(\cdot,\e)$, if $s$ is large enough. Thus, the
total multiplicity of the eigenvalues $\l_\e^{(m)}$ is at least
N. The proof is complete.

\section{Proof of Theorem~\ref{th2.2}\label{proofthm2}}

In order to prove Theorem~\ref{th2.2} we need to ensure that, for
sufficiently small $\e$, the asymptotic expansions for $\lambda_{1,m}$,
$m=1,\dots,N$ provided by Theorem~\ref{th2.1} do correspond to the first
$N$ eigenvalues of $-\Delta_{\Omega_{\e}}^{D}$ (counting multiplicities).
In~\cite{BF} this was done by means of adapting the proof of Theorem 1.1
in~\cite{frso} from the situation where $h_{-}=0$ to our case. In the
present context we need to show that, under the conditions for $h_{\pm}$,
this result may be extended to $d$ dimensions. There are two important
points that should be stressed here. On the one hand, we are assuming
$C^\infty$ regularity in a neighbourhood of the point of global maximum,
and thus do not have to deal with what could now be more complex
regularity issues at this point. On the other
hand, since the proof of eigenvalue convergence given in~\cite{frso}
is based on convergence in the norm, it is not affected by details related
to the possible higher multiplicities as was the case in the derivation of
the formulae in the previous section.

While still using the notation defined in Section~\ref{stat}, we also
refer to the notation in~\cite{frso}. In particular, the function $h$
and the operator ${\bf H}$ defined defined there correspond to our width
function $H$ and operator $G_{n}$, respectively. We begin by assuming $H$
to be strictly positive in $\overline{\omega}$. Let thus
\[
\psi(x',x_{d}) = \psi_{\chi}(x',x_{d}) = \chi(x')\sqrt{\frac{\ds 2}{\e H(x')}}
\sin\left[\frac{\ds \pi(x_{d}+\e h_{-}(x'))}{\ds \e H(x')}\right]
\]
As in~\cite{frso}, we have
\[
\|\psi_{\chi}(x',x_{d})\|_{L_{2}(\Omega_{\e})} = \dint_{\omega}\chi^2(x')dx',
\]
while now
\[
\dint_{\omega}\dint_{-\e h_{-}(x')}^{\e h_{+}(x')}\left|\nabla\psi_{\chi}(x',x_{d})\right|^{2}
=\dint_{\omega}\left|\nabla\chi(x')\right|^{2}+\left(\frac{\ds \pi^2}{\ds \e^2H^2(x')}+v(x')
\right)\chi^{2}(x')dx',
\]
with
\[
v(x') = \frac{\ds \pi^2}{\ds H^{2}(x')}\left[ \left|\frac{\ds 1}{\ds 2}\nabla H(x')-
\nabla h_{-}(x')\right|^{2}+\frac{\ds 1}{\ds 4}\left(\frac{\ds 1}{\ds 3}+
\frac{\ds 1}{\ds \pi^2}\right)\left|\nabla H(x')\right|^2\right].
\]
In the notation of~\cite{frso}, the potential $W_{\e}$ appearing in the quadratic form
$q_{\e}[\chi]$ (equation~(1.4) on page $3$ in that paper) is now defined by
\[
W_{\e}(x') = \frac{\ds \pi^2}{\ds \e^2}\left[\frac{\ds 1}{\ds H^{2}(x')}-
\frac{\ds 1}{\ds H^{2}(\xm)}\right]+v(x').
\]
We consider the scaling $x'=e^{\alpha}t$ as before, which causes the
domain $\omega$ to be scaled to $\omega_{\e}=e^{\alpha}\omega$. Then
the proofs of Lemma 2.1 and Theorem 1.2 go through with minor
changes (note that $m=2k$, while $I$ and $I_{\e}$ should be changed
by $\omega$ and $\omega_{\e}$, respectively). Similar remarks apply
to the proofs in Section 4 of~\cite{frso} leading to the proof of
Theorem 1.3, except that due to regularity we do not need to worry
about separating the domain into different parts as was necessary
there for the intervals $I_{\e}$. 

Finally, we relax the condition on the strict positivity of $H$ mentioned above. This
again follows in a similar fashion to what was done in Section 6.1 of~\cite{frso}.

We are now in conditions to proceed to the proof of (\ref{2.5}). In the case considered
the lowest eigenvalue of $G_1$ is $\L=\sum_{j=1}^d\tht_j$, while
the associated eigenfunction is given by (\ref{2.8}). This
proves the formula for $c_2^{(1,1)}$. In view of
Remark~\ref{rm3.1}, we can employ Lemma~\ref{lm3.7} to calculate
$c_3^{(1,1)}$, $c_4^{(1,1)}$. Since $\Psi_0$ is even w.r.t. each
$\xi_i$, $i=1,\ldots,d-1$, and $H_3(-\xi')=-H_3(\xi')$, we
conclude by (\ref{3.21a}) that $T^{(3)}_{11}=0$. By
Theorem~\ref{th2.1} it yields that $c^{(1,1)}_3=0$.

The equation (\ref{3.11a}) for $\widetilde{\Psi}_1$ with $j=3$ reads
as follows
\begin{equation}\label{4.2}
(G_1-\L)\widetilde{\Psi}_1=\frac{2\pi^2}{H_0^3}H_3\Psi_0.
\end{equation}
We seek the solution as $\widetilde{\Psi}_1=R\Psi_0$, where $R$
is a polynomial of the form
\begin{equation}\label{4.3}
R(\xi'):=\sum\limits_{p,q,j=1}^{d-1}C_{pqj}\xi_p\xi_q\xi_j+
\sum\limits_{j=1}^{d-1} C_j\xi_j,
\end{equation}
where $C_{pqj}$, $C_j$ are constants to be found, and $C_{pqj}$
are invariant under each permutation of the indices $p$, $q$,
$j$. We also note that such a choice of $R$ ensures that
$(\widetilde{\Psi}_1,\Psi_0)_{L_2(\mathbb{R}^{d-1})}=0$. We
substitute (\ref{4.3}) and the formula for $\widetilde{\Psi}_1$
into (\ref{4.2}) taking into account (\ref{2.7}),
\begin{multline*}
2\sum\limits_{p,q,j=1}^{d-1} (\tht_p+\tht_q+\tht_j) C_{pqj}
\xi_p\xi_q\xi_j+2\sum\limits_{j=1}^{d-1} \tht_j C_j\xi_j+
6\sum\limits_{p,j=1}^{d-1} C_{ppj}\xi_j
\\
=
-\frac{2\pi^2}{H_0^3}\sum\limits_{p,q,j=1}^{d-1}
\b_{pqj}\xi_p\xi_q\xi_j.
\end{multline*}
It yields the formulae
\begin{equation}\label{4.4}
C_{pqj}=-\frac{\pi^2\b_{pqj}}{H_0^3(\tht_p+\tht_q+\tht_j)},\quad
C_j=\frac{3}{2}\sum\limits_{p=1}^{d-1}
\frac{\pi^2\b_{ppj}}{H_0^3\tht_j(2\tht_p+\tht_j)}.
\end{equation}

It is easy to check that
\begin{equation}\label{4.5}
\begin{aligned}
&Q_1^{(i)}(\xi')=-\frac{1}{H_0}\frac{\p H_2}{\p x_i}(\xi'),\quad
P_0^{(0)}=\frac{1}{2H_0}\D_{x'} H_2,
\\
&P_4^{(d)}(\xi')=H_0^{-4} \big(3H_2^2(\xi')-2H_0H_4(\xi')\big).
\end{aligned}
\end{equation}
Employing these identities, we write the formula for
$c_4^{(1,1)}$ from Lemma~\ref{lm3.7}
\begin{align*}
c^{(1,1)}_4=&-2(\widetilde{F}_4,\psi_0)_{L_2(\Pi)}
=-2(\mathcal{L}_2\psi_0,\psi_0)_{L_2(\Pi)}
-2(\mathcal{L}_1\widetilde{\Psi}_1\sin\pi \xi_d,\psi_0)_{L_2(\Pi)}
\\
=&\pi^2 (P^{(d)}_4\Psi_0,\Psi_0)_{L_2(\mathbb{R}^{d-1})}-
(P_0^{(0)}\Psi_0,\Psi_0)_{L_2(\mathbb{R}^{d-1})}
\\
&+4\pi \sum\limits_{i=1}^{d-1}
\left(Q_1^{(i)}\frac{\p\Psi_0}{\p\xi_i}\sin\pi
\xi_d,\Psi_0\xi_d\cos\pi\xi_d\right)_{L_2(\Pi)}
\\
&-2\sum\limits_{i=1}^{d-1}
\big(K_0^{(i)}\big)^2\left(\frac{\p^2\psi_0}{\p\xi_d^2},\psi_0\right)_{L_2(\Pi)}
+\pi^2 (P^{(d)}_3\widetilde{\Psi}_1,\Psi_0)_{L_2(\mathbb{R}^{d-1})}
\\
=&\pi^2 (P^{(d)}_4\Psi_0,\Psi_0)_{L_2(\mathbb{R}^{d-1})}
+\frac{1}{2H_0}\sum\limits_{i=1}^{d-1} \a_i^2 -
\sum\limits_{i=1}^{d-1} \left(Q_1^{(i)}\frac{\p\Psi_0}{\p\xi_i},
\Psi_0\right)_{L_2(\mathbb{R}^{d-1})}
\\
&+\frac{\pi^2}{H_0^2} \sum\limits_{i=1}^{d-1} \left(\frac{\p h_1}{\p
x_i}(\xm)\right)^2+\pi^2
(P^{(d)}_3\widetilde{\Psi}_1,\Psi_0)_{L_2(\mathbb{R}^{d-1})}
\\
=&\pi^2 ((P^{(d)}_4+P^{(d)}_3)\Psi_0,\Psi_0)_{L_2(\mathbb{R}^{d-1})}
+\frac{\pi^2}{H_0^2} \sum\limits_{i=1}^{d-1} \left(\frac{\p h_1}{\p
x_i}(\xm)\right)^2.
\end{align*}
We substitute (\ref{4.3}), (\ref{4.4}), (\ref{4.5}) into this
identity and arrive at the desired formula for $c_4^{(1,1)}$.

\section{The $d-$dimensional ellipsoid}

As an application of our results, we will derive the expression~(\ref{lellipsoid})
for the asymptotic expansion for the first eigenvalue for a general ellipsoid. From
the equation defining the
boundary of $\mathcal{E}$ and assuming that, as mentioned in the Introduction, we
are doing the scaling along the $x_{d}$ axis, we have
\[
h_{\pm}(x') = a_{d}\left[1-\left(\frac{\ds x_{1}}{\ds a_{1}}\right)^2-\dots
-\left(\frac{\ds x_{d-1}}{\ds a_{d-1}}\right)^2\right]^{1/2},
\]
while $H(x')=2h_{\pm}(x')$. We thus have $\xm$ located at the origin and $H_{0}=2a_{d}$.
Expanding $H$ around $\xm$ we have
\[
\begin{array}{lll}
H(x') & = & 2a_{d}-a_{d}\left[\left(\frac{\ds x_{1}}{\ds a_{1}}\right)^2+\dots
+\left(\frac{\ds x_{d-1}}{\ds a_{d-1}}\right)^{2}\right]\eqskip
& & \hspace*{5mm}-\frac{\ds a_{d}}{\ds 4}\left[ \left(\frac{\ds x_{1}}{\ds a_{1}}\right)^4+\dots
+\left(\frac{\ds x_{d-1}}{\ds a_{d-1}}\right)^{4}\right.\eqskip
& & \hspace*{10mm} + \left.
2\left(\frac{\ds x_{1}x_{2}}{\ds a_{1}a_{2}}\right)^2+
2\left(\frac{\ds x_{1}x_{3}}{\ds a_{1}a_{3}}\right)^2\dots
+2\left(\frac{\ds x_{d-2}x_{d-1}}{\ds a_{d-2}a_{d-1}}\right)^2\right]\eqskip
& & \hspace*{15mm} + \dots,
\end{array}
\]
yielding $H_{k}=h_{k}=0$ for odd $k$ and
\[
\begin{array}{lll}
H_{2}(x') & = & -a_{d}\dsum_{i=1}^{d-1}\left(\frac{\ds x_{i}}{\ds a_{i}}\right)^{2}\eqskip
H_{4}(x') & = & -\frac{\ds a_{d}}{\ds 4}\dsum_{i=1}^{d-1}\dsum_{j=1}^{d-1}
\left(\frac{\ds x_{i}x_{j}}{\ds a_{i}a_{j}}\right)^2.
\end{array}
\]
Hence
\[
\begin{array}{llll}
\alpha_{i}=\frac{\ds \sqrt{2a_{d}}}{\ds a_{i}}, &
\theta_{i}=\frac{\ds \pi}{\ds 2a_{i}a_{d}}
\end{array}
\]
and
\[
\psi_{0}(x') = \frac{\ds 2^{\frac{1-d}{4}}a_{d}^{\frac{1-d}{4}}}
{\ds (a_{1}\dots a_{d-1})^{1/2}}e^{-\frac{\ds \pi}{4a_{d}}\left(
\frac{x_{1}^2}{a_{1}}+\dots+\frac{x_{d-1}^2}{a_{d-1}}\right)}.
\]
Note that since $H_{3}$ is identically zero, there is no need to compute $\widetilde{\Psi}_1$.
It is now straightforward to obtain
\[
c_{0}^{(1,1)} = \frac{\ds \pi^2}{\ds 4a_{d}^2}\quad \mbox{ and
}\quad c_{2}^{(1,1)} = \frac{\ds \pi}{\ds
2a_{d}}\dsum_{i=1}^{d-1}\frac{\ds 1}{\ds a_{i}}.
\]
It remains to compute
\[
\begin{array}{lll}
c_{4}^{(1,1)} & = & \frac{\ds \pi^2}{\ds 16a^{2}_{d}}\left(\left[ 3
\left(\dsum_{i=1}^{d-1}\left(\frac{\ds x_{i}}{\ds a_{i}}\right)^2\right)^2
\right.\right.\eqskip
& & \hspace*{5mm}\left.\left.+
\dsum_{i=1}^{d-1}\dsum_{j=1}^{d-1}\left(\frac{\ds x_{i}x_{j}}{\ds a_{i}a_{j}}\right)^2
\right]\psi_{0}(x'),\psi_{0}(x')\right)_{L_2(\mathbb{R}^{d-1})}\eqskip
& = & \frac{\ds \pi^{2}}{\ds 2^{\frac{d+3}{2}}a_{d}^{\frac{d+3}{2}}(a_{1}\dots a_{d-1})^{1/2}}
\eqskip
& & \hspace*{10mm}\times
\dsum_{i=1}^{d-1}\dsum_{j=1}^{d-1}\dint_{\R^{d-1}}\left(\frac{\ds x_{i}x_{j}}{\ds a_{i}a_{j}}
\right)^{2}e^{-\frac{\pi}{2a_{d}}\left(\frac{x_{1}^2}{a_{1}}+\dots+
\frac{x_{d-1}^2}{a_{d-1}}\right)}dx'
\end{array}
\]
which, after some further simplifications, yields the desired result.

\end{document}